\newtheorem{theorem}{Theorem}[section]
\newtheorem{lemma}[theorem]{Lemma}
\newtheorem{prop}[theorem]{Proposition}
\newtheorem{definition}[theorem]{Definition}
\newenvironment{remark}{\noindent \textbf{Remark}.}{\hfill $\square$}
\newcommand{\bb}{\mathbb}
\newcommand{\cal}{\mathcal}
\numberwithin{equation}{section}
\newcommand{\Hom}{\rm Hom}
\def\Ext(#1,#2,#3){\mbox{Ext}^{#1}(#2,#3)}
\def\Hom(#1,#2){\mbox{Hom}(#1,#2)}
\def\hom(#1,#2){\mbox{hom}(#1,#2)}
\def\ext(#1,#2,#3){\mbox{ext}^{#1}(#2,#3)}
\def\C{\mathbb{C}}
\def\P{\mathbb{P}}
\def\O{\mathcal{O}}
\def\a'{\alpha}
\def\t'{\beta}
\def\g'{\gamma}
\def\l'{\lambda}
\def\pin(#1,#2){\left\langle #1,#2 \right\rangle}
\def\pti(#1,#2,#3){\left\langle #1,#2,#3 \right\rangle}
\def\suma(#1,#2,#3){\sum\limits_{#1=#2}^#3}
\title{The birational geometry of moduli space of sheaves on the projective plane }
\author{Aaron Bertram}
\address{University of Utah, Department of Mathematics, Salt Lake City, UT 84112}
\email{bertram@math.utah.edu}
\author{Cristian Martinez}
\address{University of Utah, Department of Mathematics, Salt Lake City, UT 84112}
\email{martinez@math.utah.edu}
\author{Jie Wang}
\address{University of Georgia, Department of Mathematics, Athens, GA 30602}
\email{jiewang@math.uga.edu}
\subjclass[2000]{}
\keywords{}
\begin{document}
\maketitle

\begin{abstract} 
 We describe a close relation between wall crossings in the birational geometry of moduli space of Gieseker stable sheaves $M_H(v)$ on $\bb{P}^2$  and mini-wall crossings in the stability manifold $Stab(D^b(\bb{P}^2))$.
\end{abstract}

\bigskip

\section{Introduction.}
Let $(X,H)$ be an anti-canonically polarized smooth Del Pezzo surface and $M_H(v)$ be the coarse moduli space parametrizing $S$-equivalent classes of Gieseker semi-stable sheaves on $X$ of topological type $v$. In this paper, we study the birational geometry of $M_H(v)$ with emphasis on the case $X=\bb{P}^2$. 

The general philosophy is that the birational geometry of $M_H(v)$ is closely related to the birational geometry of $X$. So if $-K_X$ is ample, we hope $-K_{M_H(v)}$ is ample as well. This turns out to be not quite right but is close. We first show that if $v$ is a primitive topological type such that $M_H(v)$ is non-empty and irreducible, then $M_H(v)$ is smooth and $-K_{M_H(v)}$ is big and nef. In particular, $M_H(v)$ is a Mori dream space. Therefore $M_H(v)$ provide a rich class of examples of the so called weak Fano varieties, whose classification theory is highly interesting in its own right.  We achieve the statement by showing that the first Chern class of $-K_{M_H(v)}$ is equal to the first Chern class of the determinant line bundle constructed by J. Li in \cite{L1} which gives a contracting morphism from $M_H(v)$ to the Donaldson-Uhlenbeck compactification of the moduli space of $\mu$-stable vector bundles as defined in Gauge theory. The computation of the Chern classes of $-K_{M_H(v)}$ seems to be well known among the experts and could be found, for instance, in \cite{HL}.

 As the name ``Mori dream space'' suggests, the Mori theory of $M_H(v)$ behaves as nicely as it could be. There exists a a polyhedral chamber decomposition of its pseudo-effective cone $\overline{NE}^1(M_H(v))$. These
chambers are known as the Mori chambers. Specifically, If $L$ is a effective line bundle on a Mori dream space $M$, then its section ring 
$$R(M,L):=\oplus_{n\ge0}H^0(M,L^n)$$
is finitely generated. Thus the rational map defined by the linear series $|L^n|$ stabilizes to some rational map
$$\Phi_L:M\dashrightarrow Proj R(M,L)$$
for all large divisible $n$. Two line bundles $L_1$ and $L_2$ are said to be Mori equivalent if $\Phi_{L_1}=\Phi_{L_2}$. This equivalence relation naturally extends to $N^1(M)_{\bb{R}}$ and a Mori chamber is just the closure of an equivalence class in $N^1(M)_{\bb{R}}$ whose interior is open in $N^1(M)_{\bb{R}}$. These chambers are polyhedral and in one-to-one correspondence with birational contractions of $M$ having $\bb{Q}$-factorial image by associating each chamber $\cal{P}$ the birational contraction
$$\Phi_L:M\dashrightarrow Proj R(M,L)$$
for some $L$ in the interior of $\cal{P}$. 

On the other hand, let $\cal{D}=\cal{D}^b(coh(X))$ be the bounded derived category of coherent sheaves on $X$. Bridgeland showed that the space of stability conditions on $\cal{D}$ is a complex manifold. In this paper, we consider a slice of the stability manifold parametrized by the upper half plane $s+\sqrt{-1}t$, $t>0$. For each $(s,t)$, there is an abelian subcategory $\cal{A}_s$ of $\cal{D}$ 
that forms the heart of a t-structure on $\cal{D}$ and a central charge $Z_{s,t}$ such that the pair $(\cal{A}_s,Z_{s,t})$ is a Bridgeland stability condition. Abramovich and Polishchuk \cite{AP} have constructed moduli stacks $\cal{M}_{s,t}(v)$ parametrizing Bridgeland semi-stable objects with topological type $v$.

When $X=\bb{P}^2$, it is proved in \cite{ABCH} that the moduli spaces of Bridgeland stable objects can be interpreted as the moduli spaces of quiver representations with respect to some polarization and can be constructed by Geometric Invariant Theory. In particular, there exist projective coarse moduli spaces. It is also shown that when $s<0$ and $t>>0$, the coarse moduli space $M_{s,t}(v)$ is isomorphic to $M_H(v)$. As we decrease $t$, $M_{s,t}(v)$ changes. Thus we obtain a wall and chamber decomposition of the $(s,t)$-plane into chambers which the corresponding moduli space are isomorphic. Crossing a Bridgeland wall means $M_{s,t}(v)$ changes.

The birational geometry of Hilbert scheme of $n$ points on $\bb{P}^2$, i.e. $v=(1,0,-n)$, has been extensively studied in \cite{ABCH}.\footnote{The birational geometry of the Hilbert scheme of points on a Del Pezzo surface has also been studied in \cite{BC} recently.} There a one-to-one correspondence of the wall and chamber structures between $\overline{NE}^1(\bb{P}^{2[n]})$ and certain region in the Bridgeland (s,t)-plane has been described. In this paper, we try to give a systematic interpretation why such phenomenal should happen for general $v$ on $\bb{P}^2$.

For $\bb{P}^2$, the fundamental reason is that there is a canonical choice of a determinant line bundle $\lambda_{s,t}$ on $M_{s,t}(v)$ which coincide with the ample line bundle in the GIT construction of the moduli of quiver representations. We prove that this canonical choice of determinant line bundle naturally determines a polarization $\vec{a}$ such that a complex of topological type $v$ is $(s,t)$ (semi-)stable if and only if it is quiver (semi)-stable with respect to the polarization $\vec{a}$ (c.f. section 4). Thus $\lambda_{s,t}$ coincide with the ample linearization in the GIT construction and therefore ample. This canonical choice of ample line bundle also turns out to coincide with the natural determinant line bundle associated to a stability condition $\sigma$ constructed by Bayer and Macri in \cite{BM}. Bayer and Macri proved that their line bundle is ample when $\sigma$ is a generic stability condition on the derived category of a K3 surface. In our situation, the determinant line bundles $\lambda_{s,t}$ is automatically ample because of the GIT construction. 

Ideally, as we decrease $t$ (fixing $s$), the pull-back of $\lambda_{s,t}$ to $M_H(v)$ is a path in $NS^1(M_H(v))_{\bb{R}}$, so the various $M_{s,t}$ we obtain as we decrease $t$ should correspond to running a directed MMP along the path in $NS^1(M_H(v))_{\bb{R}}$. However, due to the complicatedness of the affine scheme of which we take quotient in the GIT construction, we do not even know in general $M_{s,t}(v)$ is irreducible. Therefore in our description, we have to pass to the main component $M^P_{s,t}(v)$ of $M_{s,t}(v)$ whose generic point is parametrizing a sheaf (instead of a complex of sheaves). 

We summarize our main results as

\begin{theorem} Let $X=\bb{P}^2$ polarized by the hyperplane class $H$ and $M_H(v)$ be the coarse moduli space parametrizing S-equivalence classes of Gieseker semi-stable torsion free sheaves of primitive topological type $v=(r,c_1,ch_2)$.
\begin{enumerate}\item If $M_H(v)$ is non-empty, then it is a smooth weak Fano variety ( i.e $-K_{M_H(v)}$ is big and nef) of Picard rank at most $2$. In particular, it is a Mori dream space. 
\item Starting from $t>>0$ and decreasing $t$ corresponds to running a directed MMP on $M_H(v)$. As long as the generic point of the exceptional loci of each contraction is a sheaf \footnote{It is not clear if there exists a Bridgeland wall on which no sheaves are destabilized.}, each birational model (both in the interior of the Mori chamber and on the wall) we get in the directed MMP is isomorphic to the normalization the of main component of the Bridgeland moduli in the corresponding Bridgeland chamber and wall. 
\end{enumerate}
\end{theorem}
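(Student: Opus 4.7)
The plan is to treat the two parts of the theorem in sequence.

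\emph{Part (1).} Smoothness reduces to verifying $\text{Ext}^2(E,E)=0$ at every stable $E\in M_H(v)$. Primitivity of $v$ rules out strictly semistable sheaves, so every point is stable; Serre duality on $\P^2$ gives $\text{Ext}^2(E,E)\cong\text{Hom}(E,E\otimes K_{\P^2})^\ast$, which vanishes because stability together with $\mu(E\otimes K_{\P^2})<\mu(E)$ forbids nonzero maps. To establish the weak Fano property I would compute $c_1(-K_{M_H(v)})$ via the universal family and Grothendieck--Riemann--Roch following \cite{HL}, and then identify this class with the first Chern class of J.\ Li's determinant line bundle in \cite{L1}. That class is semi-ample---it defines the contracting morphism to the Donaldson--Uhlenbeck compactification---and big, since the morphism is birational onto its image; hence $-K_{M_H(v)}$ is big and nef. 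The bound on Picard rank follows because $\text{Pic}(M_H(v))_{\Q}$ is generated by Donaldson classes attached to $v^\perp\subset K(\P^2)_{\Q}\cong\Q^3$, which is two-dimensional. Finally, a smooth weak Fano variety is a Mori dream space by BCHM.

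\emph{Part (2), setup.} Fix $s<0$. By the introduction, $\lambda_{s,t}$ is ample on $M^P_{s,t}(v)$ for every $(s,t)$ via the GIT construction; for $t\gg 0$, $M^P_{s,t}(v)\cong M_H(v)$ and $\lambda_{s,t}$ is ample there, sitting in the interior of the Gieseker Mori chamber. In general I would pull $\lambda_{s,t}$ back through the common open locus where Bridgeland and Gieseker moduli agree to obtain a continuous, piecewise linear path $\gamma(t)\in N^1(M_H(v))_{\R}$. Within a Bridgeland chamber, $M^P_{s,t}(v)$ is constant and $\gamma(t)$ traces out part of a single Mori chamber; by the hypothesis that the exceptional locus at each wall contains a sheaf, crossing a Bridgeland wall corresponds precisely to $\gamma(t)$ hitting the wall of that Mori chamber, since the destabilized objects map under the open identification onto the locus contracted by $\Phi_{\gamma(t)}$.

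\emph{Part (2), identification.} To match $\text{Proj}\, R(M_H(v),L)$ with $M^P_{s,t}(v)^\nu$ for $L$ in the Mori chamber containing $\gamma(t)$, observe that both are normal projective varieties birational to $M_H(v)$; ampleness of $\lambda_{s,t}$ on $M^P_{s,t}(v)^\nu$, together with Mori-equivalence of its pullback to $L$, yields a canonical morphism identifying them in codimension one, hence everywhere by normality. The main obstacle is this identification of exceptional loci wall by wall: one must track how sheaves are exchanged for complexes (and vice versa) as $t$ decreases, and show that the Bridgeland-destabilized objects coincide with the Mori-contracted locus in $M_H(v)$. This is also where the assumption in the footnote---that no Bridgeland wall destabilizes only complexes---enters essentially.
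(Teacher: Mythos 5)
Your Part (1) follows the paper's route essentially verbatim: $\mathrm{Ext}^2$-vanishing via Serre duality and stability, the Grothendieck--Riemann--Roch identification $c_1(-K_{M_H(v)})=c_1(\mathcal{L}_1)$, J.~Li's big-and-nefness, and BCHM. The only soft spot is the Picard rank: the claim that $\lambda:v^\perp\to \mathrm{Pic}(M_H(v))$ is surjective (so that the rank is at most $\dim v^\perp=2$) is a nontrivial theorem of Drezet and Le Potier, not a formal consequence of the determinant construction; you need to invoke it, as the paper does in Section 3.4.

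Part (2) has two genuine gaps. First, you take the ampleness of $\lambda_{s,t}$ on the Bridgeland moduli as given ``by the introduction,'' but this is one of the main technical results of the paper, not an input. The GIT construction only provides ampleness of the linearization $\vec{a}=(a_0,a_1,a_2)$ used in King's quotient; to conclude that the \emph{canonical} determinant class $\lambda_{s,t}=\det(p_!([q^*F_{s,t}]\cdot[\mathcal{E}]))$ is ample, one must show it coincides with that linearization, i.e.\ compute $a_i=(-1)^i\chi(w_{s,t},\mathcal{O}_{\mathbb{P}^2}(k-2+i))$ (Lemma 4.1) and prove that $(s,t)$-(semi)stability is equivalent to quiver (semi)stability for exactly this $\vec{a}$ (Proposition 4.4, which requires a careful tilting argument comparing $\mathcal{A}(k)$ with $\mathcal{P}_{s,t}(\alpha,1]$ and $\mathcal{P}_{s,t}(0,\alpha][1]$). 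Without this step there is no ample class on $M^P_{s,t}(v)$ to pull back, and the whole path $\gamma(t)$ is undefined. Second, your assertion that ``crossing a Bridgeland wall corresponds precisely to $\gamma(t)$ hitting the wall of that Mori chamber'' is exactly what must be proved, and you give no mechanism. A priori a Bridgeland wall could be a fake wall (stable objects change but the coarse space does not) or a bouncing wall ($M^P_{s,t_0^-}\cong M^P_{s,t_0^+}$), both of which occur for K3 surfaces. The paper excludes these via Lemma 5.1: if $A,B$ are the $(s,t_0)$-stable factors exchanged at the wall and $A$ is a sheaf, then $\hom^1(B,A)>\hom^1(A,B)$, proved by restricting to a general cubic curve and computing $\chi(A^\vee\otimes^L B\otimes^L\mathcal{O}_C)=ch_0(A)ch_1(B)-ch_0(B)ch_1(A)>0$. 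This asymmetry is what forces $\pi_0^+$ to have positive-dimensional exceptional locus and $\pi_0^-$ to be a small contraction, hence a genuine flip or divisorial contraction on the Mori side; it is also precisely where the footnote hypothesis (the destabilizing object is a sheaf) is consumed, via the vanishing $\mathrm{Hom}^2(A,\mathcal{O}_C(k-j)[j])=0$. Your sketch places the role of that hypothesis vaguely in ``tracking exceptional loci'' but never supplies the Ext-inequality that makes the argument close.
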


The organization of this paper is as follows. In section 2, we recall basic facts about Bridgeland stability conditions and introduce a complex plane worth of stability conditions that arise in our study of the birational geometry of $M_H(v)$. In section 3, we review the determinant line bundle construction on $M_H(v)$ and present a computation of the first Chern class of $-K_{M_H(v)}$ as an application of the Grothendieck-Riemann-Roch theorem. In section 4, specialize to the $\bb{P}^2$ case. We first construct a natural determinant line bundle $\lambda_{s,t}$ on $M_{s,t}(v)$. For each $(s,t)$, we find a suitable polarization $\vec{a}=(a_0,a_1,a_2)$ depending canonically on $(s,t)$ and prove that a complex of topological type $v$ is $(s,t)$-stable if and only if it is quiver stable with respect to polarization $\vec{a}$. As $t$ changes, the polarization $\vec{a}$ also changes, so this problem can be also viewed as a variation of GIT problem.  In section 5, we run the directed MMP on $M_H(v)$ by decreasing $t$ and interpret the various birational models we get in the process as the main component of the Bridgeland moduli spaces. Finally, in section 6, we describe explicitly the flips for the topological types $(0,4,-4)$ and $(0,5,-\frac{15}{2})$ by using the stratification of the Gieseker moduli given by Drezet and Maican \cite{DM} and by Maican \cite{M5} and give similar stratifications for the Bridgeland moduli spaces.\

\section{Preliminaries}\label{sec:prelim}
In this section, we review some basic facts concerning Bridgeland stability conditions.

Let $D^b(X)$ denote the bounded derived category of coherent sheaves on a smooth projective surface $X$. 

\begin{definition}
A numerical pre-stability condition $\sigma$ on $D^b(X)$ consists of a pair $(Z,\cal{A})$, where 
$$Z=-d+\sqrt{-1}\ r:\ K(X)_{num}\rightarrow\bb{C}$$
is a group homomorphism called central charge and $\cal{A}\subset D^b(X)$ is the heart of a $t$-structure, satisfying the following properties:
\begin{enumerate}
\item $r(E)\ge0$ for all $E\in\cal{A}$;
\item if $r(E)=0$ and $E\in\cal{A}$ nonzero, then $d(E)>0$.
\end{enumerate} 
\end{definition}

We can use them to define a notion of slope-stability via the slope $\mu(E)=\frac{d(E)}{r(E)}$. $\mu$ takes value in $(-\infty,+\infty]$.

\begin{definition} An object $E\in\cal{A}$ is stable (resp. semistable) if for any subobject $F$ of $E$ in $\cal{A}$,
$$\mu(F)<\mu(E)\ (resp. \mu(F)\le\mu(E)).$$
A pre-stability condition $\sigma=(Z,\cal{A})$ is a stability condition if any nonzero object $E\in\cal{A}$ 
admits a finite filtration:
$$0\subsetneq E_0\subsetneq E_1\subsetneq...\subsetneq E_n=E$$
uniquely determined by the property that each $F_i:=\frac{E_i}{E_{i-1}}$ is semistable and 
$$\mu(F_0)>\mu(F_1)>...>\mu(F_n).$$
This property is called the Harder-Narasimhan property.

\end{definition}

We will also use the notation of slicing introduced by Bridgeland.
For a stability condition $\sigma$ and a real number $\phi\in(0,1]$, we define a full abelian subcategory $\cal{P}(\phi)$ of $D^b(X)$ consisting of semistable objects of phase $\phi$. The relation between phase and slope is given by
$$\mu=-\cot(\pi\phi).$$
We then inductively define the category $\cal{P}(\phi)$ of semistable objects of arbitrary phase $\phi$ by enforcing
$$\cal{P}(\phi+1)=\cal{P}(\phi)[1].$$
Finally denote $\cal{P}(a,b]$ be the full subcategory consisting of objects whose semistable factors have phase in $(a,b]$. With this notation, the heart $\cal{A}=\cal{P}(0,1]$.

It turns out that on a smooth projective surface, we can not take $coh(X)$ as the heart of any stability condition. The next simplest heart, which we will use extensively in this paper, comes from tilting a torsion pair.

Fix a polarization $H$ on $X$.  The Mumford slope (w.r.t $H$)
$$\mu^M(E)=\frac{c_1(E)\cdot H}{rk(E)}$$ does not give a stability condition on $coh(X)$
since
$$c_1(\bb{C}_p)=rk(\bb{C}_p)=0.$$

Nevertheless, it has a (weak) Harder-Narasimhan property:
Any coherent sheaf $E$ admits a filtration
$$E_0\subsetneq E_1\subsetneq...\subsetneq E_n=E$$
where $E_0$ is the torsion subsheaf of $E$ and for $i>0$, the subquotients $F_i:=\frac{E_i}{E_{i-1}}$ are Mumford semi-stable torsion free sheaves of strictly decreasing Mumford slopes.

For any real number $s$, consider the torsion pair $(\cal{Q}_s,\cal{F}_s)$:
\begin{enumerate}
\item $Q\in\cal{Q}_s$ if $Q$ is torsion or if each $\mu^M_i>s$ in the HN-filtration of $Q$.
\item $F\in\cal{F}_s$ if $F$ is torsion free and each $\mu^M_i\le s$ in the HN-filtration of $F$.
\end{enumerate}

Each pair ($\cal{Q}_s,\cal{F}_s$) are full subcategories of $D^b(X)$ satisfying
\begin{enumerate}
\item $Hom(Q,F)=0$ for any $Q\in\cal{Q}_s$, $F\in\cal{F}_s$;
\item Every coherent sheaf $E$ fits in a (unique up to isomorphism) short exact sequence
$$0\rightarrow Q\rightarrow E\rightarrow F\rightarrow0.$$
\end{enumerate}

The heart $\cal{A}_s$ of the t-structure obtained by tilting the torsion pair $(\cal{Q}_s,\cal{F}_s)$ is the extension closure of $\langle\cal{Q}_s,\cal{F}_s[1]\rangle$ consisting of
$$\cal{A}_s=\{E\in D^b(X)\ |\ H^{-1}(E)\in\cal{F}_s, H^0(E)\in\cal{Q}_s\ \text{and }H^i(E)=0\ \text{otherwise} \}$$ 

\begin{theorem} \label{existence}$($\cite{AB},\cite{B2}$)$For each $s\in\bb{R}$ and $t>0$, choose an $\bb{R}$-divisor $D$ such that $D\cdot H=s$, then the central charge
\begin{eqnarray}Z(E)&=&-\int_X e^{-(D+\sqrt{-1} tH)}ch(E)\nonumber\\
&=&-\int_X [ch_2(E)-D\cdot c_1(E)+\frac{1}{2}r(D^2-t^2H^2)+\sqrt{-1}t(-c_1(E)\cdot H+rsH^2)]\nonumber
\end{eqnarray}
on $\cal{A}_s$ defines a stability condition.
\end{theorem}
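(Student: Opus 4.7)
The plan is to verify the three defining properties of a stability condition in the sense of the definition above: (1) $\Im Z(E) \geq 0$ for every $E \in \cal{A}_s$; (2) nonzero $E \in \cal{A}_s$ with $\Im Z(E)=0$ satisfies $\Re Z(E) < 0$; and (3) the Harder--Narasimhan property. First I would separate $Z$ into real and imaginary parts. The imaginary part is (up to the positive factor $t$) a shifted Mumford degree, so given the defining triangle $H^{-1}(E)[1] \to E \to H^0(E)$ for $E \in \cal{A}_s$, property (1) reduces directly to the construction of the torsion pair $(\cal{Q}_s, \cal{F}_s)$: every Mumford HN-factor of $H^0(E) \in \cal{Q}_s$ is either torsion (with $c_1 \cdot H \geq 0$) or torsion-free of slope $>s$, while every HN-factor of $H^{-1}(E) \in \cal{F}_s$ has slope $\leq s$ and therefore contributes nonnegatively after the shift $[1]$.

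The content of the theorem is property (2). Combined with (1), the vanishing of $\Im Z(E)$ forces $H^0(E)$ to be zero-dimensional and $H^{-1}(E)$ to be Mumford semistable of slope exactly $s$. For a zero-dimensional sheaf $T$ one has $\Re Z(T) = -\mathrm{length}(T) < 0$, hence $d(T) > 0$. The heart of the proof is the shifted piece: for a Mumford semistable torsion-free sheaf $F$ of slope $s$ and rank $r \geq 1$, I would complete the square and rewrite
$$\Re Z(F) = \frac{1}{2r}\bigl(c_1(F)^2 - 2r\,\mathrm{ch}_2(F)\bigr) - \frac{1}{2r}\bigl(c_1(F) - rD\bigr)^2 + \tfrac{1}{2}\,r\,t^2\,H^2.$$
The first term is nonnegative by the Bogomolov inequality for Mumford semistable sheaves; the second is nonnegative by the Hodge index theorem applied to the $H$-orthogonal class $c_1(F) - rD$; and the third is strictly positive since $t > 0$. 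Hence $\Re Z(F) > 0$, so $d(F[1]) = \Re Z(F) > 0$. Combined with the zero-dimensional contribution one obtains $d(E) > 0$, establishing (2). This completing-the-square step, powered by Bogomolov and Hodge index, is where the subtle surface geometry enters and is the principal obstacle in the proof.

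For property (3) I would invoke Bridgeland's general existence criterion: once (1) and (2) are in place, the HN property follows provided $\cal{A}_s$ is Noetherian and the image of $Z$ has no accumulation near zero. The latter is automatic because $Z$ factors through the finitely generated group $K_{\mathrm{num}}(X)$, so that $\Im Z$ takes values in a discrete subset of $\bb{R}_{\geq 0}$. Noetherianity of the tilted heart $\cal{A}_s$ follows from a standard descent argument, first bounding descending chains by $\Im Z$ off the nucleus and then by $\Re Z$ along the nucleus, as carried out in \cite{AB} and \cite{B2}. Combining (1), (2), and (3) then yields that $(Z, \cal{A}_s)$ is a Bridgeland stability condition.
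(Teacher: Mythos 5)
The paper does not actually prove this theorem; it is quoted with attribution to \cite{AB} and \cite{B2}, so there is no internal argument to compare yours against. Your verification of the two positivity axioms is the standard argument and is correct: split $E$ by the triangle $H^{-1}(E)[1]\to E\to H^0(E)$, reduce $\Im Z\ge 0$ to the definition of the torsion pair $(\cal{Q}_s,\cal{F}_s)$, and, when $\Im Z(E)=0$, handle the zero-dimensional piece via $\Re Z=-\mathrm{length}$ and the shifted piece by completing the square so that $\Re Z(F)$ is the sum of a Bogomolov discriminant term, minus the square of an $H$-orthogonal class (Hodge index), and the strictly positive term $\frac{1}{2}rt^2H^2$. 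This is exactly how the cited sources show $Z$ is a stability function on $\cal{A}_s$.

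The gap is in your treatment of the Harder--Narasimhan property. You assert that the image of $\Im Z$ is discrete ``because $Z$ factors through the finitely generated group $K_{\mathrm{num}}(X)$.'' That inference is false: a homomorphism from $\bb{Z}^n$ to $\bb{R}$ need not have discrete image. Concretely, $\Im Z(E)=t\,(c_1(E)\cdot H - r s H^2)$, and for irrational $s$ the set $\{c_1\cdot H - r s H^2 : r,\ c_1\cdot H\in\bb{Z}\}$ is dense in $\bb{R}$. Since the theorem is stated for all $s\in\bb{R}$, the criterion ``Noetherian heart plus discrete image of $\Im Z$'' only disposes of the case $s\in\bb{Q}$ --- which is precisely why \cite{B2} first restricts to rational classes and then extends by deformation inside the stability manifold, and why \cite{AB} gives a separate argument. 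To close the gap you must either (i) prove the HN property directly for irrational $s$, e.g.\ by using the Bogomolov inequality to bound the Chern characters of potential destabilizing subobjects so that only finitely many slopes occur above any threshold, or (ii) realize the irrational-$s$ conditions as deformations of rational ones, where the HN property is inherited. As written, your final step does not cover the full range of parameters in the statement.
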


For each stability condition, we could consider the moduli stack of semistable objects with fixed topological invariants. In the case $X=\bb{P}^2$, the first author proved that these moduli spaces coincide with quiver moduli space in the sense of King \cite{K}. In particular, there exists a projective  coarse moduli space as GIT quotient of some highly reducible affine scheme. We still do not know if these coarse moduli spaces are irreducible or not.

So let $X=\bb{P}^2$ for the rest of this section and $H$ be the hyperplane class. Take $D=sH$ in theorem \ref{existence} and think of $ch(E)=(r,c_1,d)$ as numbers. The central charge becomes
$$Z_{s,t}(E)=-(d-sc_1+\frac{1}{2}r(s^2-t^2))+\sqrt{-1}t(c_1-rs)$$
and the slope function becomes
$$\mu_{s,t}(E)=\frac{d-sc_1+\frac{1}{2}r(s^2-t^2)}{t(c_1-rs)}$$

The $(s,t)$ upper half plane is a slice of the stability manifold.
 
 \subsection{Quiver moduli}
 
 Every $(s,t)$-moduli space is a quiver moduli with respect to some polarization (c.f. \cite{ABCH}). For each integer $k\in\bb{Z}$ the three objects:
 $$\cal{O}_{\bb{P}^2}(k-2)[2],\ \cal{O}_{\bb{P}^2}(k-1)[1],\ \cal{O}_{\bb{P}^2}(k)$$
 form an ``Ext-exceptional'' collection. The extension closure
 $$\cal{A}(k):=\langle\cal{O}_{\bb{P}^2}(k-2)[2],\ \cal{O}_{\bb{P}^2}(k-1)[1],\ \cal{O}_{\bb{P}^2}(k)\rangle$$
 is the heart of a t-structure.
 
 The objects of $\cal{A}(k)$ are complexes $E$:
 $$\bb{C}^{n_0}\otimes\cal{O}_{\bb{P}^2}(k-2)\rightarrow\bb{C}^{n_1}\otimes\cal{O}_{\bb{P}^2}(k-1)\rightarrow\bb{C}^{n_2}\otimes\cal{O}_{\bb{P}^2}(k)$$
 
 A subobject of $E$ in $\cal{A}(k)$ is a sub-complex of $E$ of the same form but with dimension vectors $(b_0,b_1,b_2)$ where $b_i\le n_i$ for all $i$. Thus, quite unlike the category of coherent sheaves, there are only finitely many possible invariants for subobjects of an object with given invariants.
 
 Fix a dimension vector $\vec{n}=(n_0,n_1,n_2)$ and a triple of integers $\vec{a}=(a_0,a_1,a_2)$ such that
 $$a_0n_0+a_1n_1+a_2n_2=0,$$
 
 We say an object $E\in\cal{A}(k)$ with dimension vector $\vec{n}$ is (semi)stable with respect to the polarization $\vec{a}$, if for any nontrivial subobject $F$ of $E$ with dimension vector $\vec{b}=(b_0,b_1,b_2)$, $b_i\le n_i$,
 $$a_0b_0+a_1b_1+a_2b_2>(\ge)0.$$
 
 King constructed the coarse moduli spaces of quiver representations as the GIT quotient of an affine variety.

 On the quiver moduli space, there is a natural ample line bundle defined as follows. A family of complexes on $\bb{P}^2$ parametrized by a scheme $S$ is a complex:
$$
U(k-2)\longrightarrow V(k-1)\longrightarrow W(k)
$$
 on $S\times\bb{P}^2$, where $U,V,W$ are vector bundles of ranks $n_0,n_1,n_2$ pulled back from $S$, twisted, respectively by the pulled backs of $\cal{O}_{\bb{P}^2}(k-2),\cal{O}_{\bb{P}^2}(k-1),\cal{O}_{\bb{P}^2}(k)$. 
 
 In this setting, the determinant line bundle on $S$ 
 $$(\wedge^{n_0}U)^{a_0}\otimes(\wedge^{n_1}U)^{a_1}\otimes(\wedge^{n_2}U)^{a_2}$$ is the pull back of the ample line bundle on the moduli stack of complexes that restricts to the ample line bundle on the moduli space of semi-stable complexes determined by Geometric Invariant Theory.

\section{The Gieseker moduli} \label{sec:G} 

Now let $X$ be a smooth Del Pezzo surface and use the anti-canonical bundle $H=-K_X$ as the polarization. For the rest of the paper, Gieseker or Mumford stability are all with respect to this polarization. Fix a primitive  topological type 
 $v=(r,c_1,ch_2)\in H^*(X,\bb{Q})_{alg}$. Consider the coarse moduli space $M(v)$ parametrizing $S$-equivalent classes of Gieseker semi-stable torsion free sheaves \footnote{Much of the theory in sections 3, 4, 5 will work for torsion sheaves of pure dimension $1$. See section 6 for some explicit examples in the torsion case.} on $X$ with topological type $v$. Suppose $M(v)$ is nonempty and irreducible (e.g. $v=(1,0,-n)$ is the case of Hilbert scheme of $n$-point on $X$, when $X=\bb{P}^2$, $M(v)$ is always irreducible, c.f. \cite{LP} chapter 17). We will show in this section that $M(v)$ is smooth and weak Fano (i.e. $-K_{M(v)}$ is big and nef). In particular, by \cite{BCHM}, $M(v)$ is a Mori dream space and  there is a finite rational polyhedra decomposition of the pseudo-effective cone $\overline{NE}^1(M(v))\subset N^1(M(v))_{\bb{R}}$  according to the stable base locus of the divisors. The results in this section are well known to the experts and are implicitly comtained in the standard references \cite{HL} and \cite{LP}.
 
 \begin{remark}
 The condition that $v$ being primitive means $g.c.d. (r, c_1\cdot H, \chi)=1$ where $\chi=r\chi(\cal{O}_X)-c_1\cdot K_X/2+ch_2$ is the Euler characteristic of the sheaf. This condition implies that there are no strictly semistable sheaves, in other words $M(v)^s=M(v)$. $v$ being primitive also implies that there exists a universal sheaf $\cal{E}$ on $M(v)$ (c.f. \cite{HL} Cor 4.6.7).
 \end{remark}

\subsection{Smoothness and dimension}
Let $F$ be a stable sheaf on $X$ with topological type $v$. From deformation theory we know that
$\rm{Ext}^1_{\cal{O}_X}(F,F)$ is the tangent space for the deformation functor of $F$ and $\rm{Ext}^2_{\cal{O}_X}(F,F)$ is an obstruction space. Since $F$ is stable and $X$ is Del Pezzo, 
\begin{eqnarray}\label{obstruction}\rm{Ext}^2_{\cal{O}_X}(F,F)^{\lor}\cong\rm{Hom}_{\cal{O}_X}(F,F\otimes_{\cal{O}_X}K_X)=0.
\end{eqnarray}
This means that $M^s(v)=M(v)$ is smooth of dimension 
$$\rm{ext}^1_{\cal{O}_X}(F,F)=1-r^2-2rch_2+c_1^2$$
 by Riemann-Roch.

\begin{remark} If $F$ is strictly semistable, we can not conclude $M(v)$ is smooth at $[F]$ even if $\rm{Ext}^2_{\cal{O}_X}(F,F)=0$. The issue here is that $M(v)$ is just a coarse moduli space.
\end{remark}

\subsection{Determinant line bundles on $M(v)$.}
In this subsection, we briefly review the determinant line bundle construction on $M(v)$. We will describe a general method for associating to a flat family of coherent sheaves a determinant line bundle on the base of the family. We also describe a particular determinant line bundle $\cal{L}_1$ on $M(v)$ 
such that the linear series $|\cal{L}_1^m|$ for large $m$ contracts certain parts of the moduli space and defines a morphism from $M(v)$ to the Donaldson-Uhlenbeck compactification $M^{\mu ss}(v)$ of the moduli space of $\mu$-stable vector bundles. This subsection is based on the work of J. Le Potier and J. Li. We refer to \cite{HL} chapter 8 for an excellent exposition on this subject.

The Grothendieck group $K(X)$ of coherent sheaves on $X$ becomes a ring with $1=[\cal{O}_X]$ and multiplication $[F_1]\cdot[F_2]=[F_1\otimes^L F_2]$. There is also a natural pairing $\chi$ on $K(X)$ defined to be $\chi([F_1],[F_2])=\chi([F_1]\cdot[F_2])=\int_{X}ch(F_1) ch(F_2) td(X)$.  Since $X$ is a Del Pezzo surface, the Chern character map $ch: K(X)_{\bb{Q}}\rightarrow H^*(X,\bb{Q})_{alg}$ is an isomorphism and $\chi$ is a nondegenerate pairing on $K(X)_{\bb{Q}}$. Due to this isomorphism, we will occasionally abuse the notation by thinking of $\chi$ as a nondegenerate pairing on $H^*(X)_{alg}$ or sometimes even writing $\chi(u, v)$ for $u\in K(X)$, $v\in H^*(X)_{alg}$.

Let $\cal{E}$ be a flat family of sheaves of topological type $v$ on $X$ parametrized by $S$. Denote $[\cal{E}]$ its class in $K^0(X\times S)$. Denote the projection from $X\times S$ to $X$ and $S$ as in the diagram below.
$$
\begin{diagram}
\node{X\times S}\arrow{s,r}{p}\arrow{e,t}{q}\node{X}\\
\node{S}\node{}
\end{diagram}
$$                                                                                                                                                                                                  

Notice that $p$ is a smooth morphism so $p_!: K^0(X\times S)\rightarrow K^0(S)$ is well defined. 
\begin{definition}\label{def3.1}
Define $\lambda_{\cal{E}}: K(X)\longrightarrow Pic(S)$ be the composition of the homomorphisms:
$$
\begin{diagram}\dgARROWLENGTH=1.5em
\node{K(X)}\arrow{e,t}{q^*}\node{K^0(X\times S)}\arrow{e,t}{\cdot[\cal{E}]}\node{K^0(X\times S)}\arrow{e,t}{p_!}\node{K^0(S)}\arrow{e,t}{det}\node{Pic(S).}
\end{diagram}
$$
\end{definition}
Notice that $\lambda_{\cal{E}}$ is just the 'Fourier-Mukai transform' with kernel $\cal{E}$ on the $K$-group level, composed with the determinant homomorphism which associates to a finite complex of locally free sheaves $F^{\bullet}$ on $S$  its determinant line bundle  $\otimes_i\ (detF^i)^{(-1)^i}$.

If $L$ is a line bundle on $S$, it is easy to check that 
\begin{eqnarray}\label{universal}\lambda_{\cal{E}\otimes p^*L}(u)\cong\lambda_{\cal{E}}(u)\otimes L^{\chi(u, v)}.
\end{eqnarray} 

We now apply this construction to the universal sheaf $\cal{E}$ on $X\times M(v)$. The universal sheaf is only well defined up to tensoring with the pull back of a line bundle from the base. If we choose $u\in v^{\perp}\subset K(X)$ with respect to $\chi$, by (\ref{universal}), $\lambda_{\cal{E}}(u)$ will not depend on the ambiguity of the choice of the universal sheaf and therefore yields a line bundle on $M(v)$. We will simply write $\lambda(u)$ for this determinant line bundle on $M(v)$. 

\begin{remark}  There is no universal sheaf on the coarse moduli space $M(v)$ in general, the determinant line bundle is just a line bundle on the moduli stack. This line bundle, however, always descends to $M^s(v)$ for $u\in v^{\perp}$. If $M^s(v)\subsetneq M(v)$, one needs to put extra conditions on $u$ to guarantee that this line bundle descends to the whole coarse moduli space $M(v)$. (c.f \cite{HL} Theorem 8.1.5).
\end{remark}

There are two distinguished determinant line bundles on $M(v)$ given by taking
\begin{eqnarray}\label{generator}u_i=-r\cdot h^i+\chi(v,h^i)[\bb{C}_x]\in v^{\perp},\  i=0,1
\end{eqnarray}
where $h=[\cal{O}_H]\in K(X)$, $x\in X$ and $\cal{L}_i:=\lambda(u_i)$ does not depend on the choice of $x$.

It is proved in \cite{L1} that for large $m$, the linear systerm $|\cal{L}_1^m|$ is base point free and gives a morphism from $M(v)$ to the Donaldson-Uhlenbeck compactification $M^{\mu ss}(v)$ of the moduli space of $\mu$- semistable sheaves. Two stable torsion free sheaves $F_1$, $F_2$ defines the same point in $M^{\mu ss}(v)$ if and only if $F_1^{**}\cong F_2^{**}$ and $F_1^{**}/F_1$ has the same length as $F_2^{**}/F_2$ and gives the same point in the suitable symmetric product of $X$. As a consequence, $\cal{L}_1$ is big and nef (but not ample). 

\subsection{The canonical class of $M(v)$}

In this subsection, we will prove that the anti-canonical bundle $-K_{M(v)}$ is numerically equivalent to $\cal{L}_1$ and therefore is big and nef. Let $\cal{E}$ be a universal sheaf on $M(v)$ and $p$ be the projection from $X\times M(v)$ to $M(v)$. Denote 
$$\cal{H}om_p(\cal{E},-)=p_*\circ\cal{H}om(\cal{E},-):Coh(X\times M(v))\longrightarrow Coh(M(v))$$ the relative Hom functor and 
$$\cal{E}xt_p^{\bullet}(\cal{E},-)=Rp_*\circ R\cal{H}om(\cal{E},-)$$ its derived functor. The Kodaira-Spencer map naturally indentifies tangent bundle $T_{M(v)}$ with $\cal{E}xt^1_p(\cal{E},\cal{E})$, which can also be described as the sheaf associated to the presheaf
$$U\longrightarrow \rm{Ext}^1(\cal{E}|_{p^{-1}(U)},\cal{E}|_{p^{-1}(U)}).$$

It suffices to prove that $c_1(\cal{E}xt^1_p(\cal{E},\cal{E}))=c_1(\cal{L}_1)$. This is an application of the Grothendieck-Riemann-Roch theorem.

\begin{lemma} \label{lemma3.2}$\cal{H}om_p(\cal{E},\cal{E})\cong\cal{O}_{M(v)}$, $\cal{E}xt^2_p(\cal{E},\cal{E})=0$.

\end{lemma}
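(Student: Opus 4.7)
Both statements will follow from Grothendieck's cohomology-and-base-change theorem applied to the projection $p$, combined with the pointwise computation of $\rm{Ext}^{\bullet}(F,F)$ already assembled in the excerpt.

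The first step is to identify the derived fibers. Since $\cal{E}$ is flat over $M(v)$, the derived restriction of $R\cal{H}om(\cal{E},\cal{E})$ to each closed point $[F] \in M(v)$ computes $R\rm{Hom}_{\cal{O}_X}(F,F)$. Primitivity of $v$ ensures $M^s(v) = M(v)$, so every such $F$ is stable and hence simple, giving $\rm{Hom}(F,F) = \bb{C}$. The Del Pezzo Serre duality vanishing (\ref{obstruction}) gives $\rm{Ext}^2(F,F) = 0$ at every point. By upper semicontinuity this already implies that the coherent sheaf $\cal{E}xt^2_p(\cal{E},\cal{E})$ vanishes pointwise, hence vanishes on all of $M(v)$, proving the second claim.

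For the first claim I would propagate the base change isomorphisms downwards in degree. Because $\cal{E}xt^2_p(\cal{E},\cal{E}) = 0$, base change at level $2$ is trivially satisfied, so the base change theorem gives that $\cal{E}xt^1_p(\cal{E},\cal{E})$ has the base change property; its fibers are $\rm{Ext}^1(F,F)$, of constant dimension $1 - r^2 - 2r\,ch_2 + c_1^2$ by Riemann-Roch and the vanishings above, so it is locally free. A second application yields that $\cal{H}om_p(\cal{E},\cal{E})$ is itself locally free of rank $1$, with fiber $\rm{Hom}(F,F) = \bb{C}$ at every point.

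To identify this line bundle with $\cal{O}_{M(v)}$ I would exhibit a trivializing section coming from the identity endomorphism. The identity $\mathrm{id}_{\cal{E}} \in \rm{Hom}(\cal{E},\cal{E})$ pushes forward to a global section of $\cal{H}om_p(\cal{E},\cal{E})$ whose value in each fiber is $\mathrm{id}_F \neq 0$; a nowhere vanishing section of a line bundle trivializes it. The only real obstacle is bookkeeping: one must verify the hypotheses of base change (flatness of $\cal{E}$ over $M(v)$, properness of $p$, coherence of the derived direct images), and one must know that $\cal{E}$ exists as a universal sheaf on $M(v)$, which is guaranteed by primitivity of $v$ as noted in the Remark preceding the lemma.
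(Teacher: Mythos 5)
Your proposal is correct and follows essentially the same route as the paper: the paper likewise deduces the vanishing of $\cal{E}xt^2_p(\cal{E},\cal{E})$ directly from the fiberwise Serre-duality vanishing (\ref{obstruction}), and trivializes $\cal{H}om_p(\cal{E},\cal{E})$ using stability (hence simplicity) of the fibers together with the nowhere-vanishing identity section. You have merely made explicit the cohomology-and-base-change bookkeeping that the paper leaves implicit.
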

\begin{proof}Since $\cal{E}$ restricts to the fiber of $p$ is stable and there is a nowhere vanishing section of $\cal{H}om_p(\cal{E},\cal{E})$, namely the identity map on the fibers of $p$, the first statement follows. The second statement follows from (\ref{obstruction}).
\end{proof}

 In the Grothendieck group $K(M(v))$,
$$[\cal{E}xt^{\bullet}_p(\cal{E},\cal{E})]=[\cal{H}om_p(\cal{E},\cal{E})]-[\cal{E}xt^1_p(\cal{E},\cal{E})]+[\cal{E}xt^2_p(\cal{E},\cal{E})].$$
 By lemma \ref{lemma3.2},
 $$c_1(\cal{E}xt^1_p(\cal{E},\cal{E}))=-c_1(\cal{E}xt^{\bullet}_p(\cal{E},\cal{E}))=-c_1(Rp_*(\cal{E}^{\lor}\otimes^L\cal{E}))$$
 where $\cal{E}^{\lor}$ stands for the derived dual $R\cal{H}om^{\bullet}(\cal{E},\cal{O}_{X\times M(v)})$.

Apply Grothendieck-Riemann-Roch to the product family $X\times M(v)$, we have equalities in $H^*(X,\bb{Q})_{alg}$,

\begin{eqnarray}c_1(Rp_{*}(\cal{E}^{\lor}\otimes^L\cal{E}))&=&p_*\{(ch(\cal{E}^{\lor}\otimes^L\cal{E})\cdot q^*td(X))\}_3\nonumber\\
&=&p_*\{(ch(\cal{E})^{\lor}\cdot ch(\cal{E})\cdot q^*td(X))\}_3\nonumber\\
&=&p_*\{(r,-c_1(\cal{E}),ch_2(\cal{E}),-ch_3(\cal{E}))\cdot(r,c_1(\cal{E}),ch_2(\cal{E}),ch_3(\cal{E}))\cdot q^*(1,\frac{-K_X}{2},\chi(\cal{O}_X))\}_3\nonumber\\
&=&p_*\{(r^2,0,2rch_2(\cal{E})-c_1^2(\cal{E}),0)\cdot q^*(1,\frac{-K_X}{2},\chi(\cal{O}_X))\}_3\nonumber\\
&=&p_*\{(2rch_2(\cal{E})-c_1^2(\cal{E}))\cdot \frac{-q^*K_X}{2}\}\nonumber
\end{eqnarray}

Here $\{\}_3$ means taking complex degree 3 part of a cohomology class and $p_*$ is the Gysin map. 

On the other hand,
\begin{eqnarray}ch(u_1)&=&-r(0,H,-\frac{{H}^2}{2})+(c_1\cdot H)(0,0,1)\nonumber\\
&=&(0,-rH,\frac{rH^2}{2}+c_1\cdot H)\nonumber
\end{eqnarray}
and
\begin{eqnarray}c_1(\lambda(u_1))&=&c_1(p_!(q^*(u_1)\cdot[\cal{E}]))\nonumber\\
&=&p_*\{q^*ch(u_1)\cdot ch(\cal{E})\cdot q^*td(X)\}_3\nonumber\\
&=&p_*\{q^*(0,-rH,\frac{rH^2}{2}+c_1\cdot H)\cdot q^*(1,\frac{H}{2},\chi(\cal{O}_X))\cdot ch(\cal{E})\}_3\nonumber\\
&=&p_*\{q^*(0,-rH,c_1\cdot H)\cdot ch(\cal{E})\}_3\nonumber\\
&=&p_*\{-rq^*H\cdot ch_2(\cal{E})+c_1(\cal{E})\cdot q^*(c_1\cdot H)\}\nonumber
\end{eqnarray}

Therefore
\begin{eqnarray}\label{computation}
c_1(\lambda(u_1))-c_1(\cal{E}xt^1_p(\cal{E},\cal{E}))&=&c_1(\lambda(u_1))+c_1(\cal{E}xt^{\bullet}_p(\cal{E},\cal{E}))\nonumber\\
&=&p_*\{-\frac{1}{2}c_1^2(\cal{E})\cdot q^*H+c_1(\cal{E})\cdot q^*(c_1\cdot H)\}
\end{eqnarray}

If we write $c_1(\cal{E})=p^*(c_1(Q))+q^*(c_1)$ for some line bundle $Q$ on $M(v)$, then (\ref{computation}) becomes
\begin{eqnarray}p_*\{-\frac{1}{2}(p^*c_1(Q)+q^*c_1^2+2p^*c_1(Q)q^*c_1)\cdot q^*H+(p^*c_1(Q)+q^*c_1)\cdot q^*(c_1\cdot H)\}=p_*\{-\frac{1}{2}p^*c_1^2(Q)\cdot q^*H\}\nonumber
\end{eqnarray}
and clearly
$$p_*(-\frac{1}{2}p^*c_1^2(Q)\cdot q^*H)=0,$$
as desired.

Combining J. Li 's result in \cite{L1} which implies that $\cal{L}_1$ is big and nef and section 3.3, we have

\begin{prop} Suppose the Gieseker moduli space $M_H(v)$ of primitive topological type $v$ on a smooth Del Pezzo surface $X$ is non-empty and irreducible, then $M_H(v)$ is a smooth weak Fano variety. In particular, it is a Mori dream space.

\end{prop}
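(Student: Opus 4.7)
The proof will combine three ingredients: (i) deformation theory on a Del Pezzo surface gives smoothness, (ii) a Grothendieck-Riemann-Roch computation identifies the anti-canonical class with a determinant line bundle that is known to be big and nef, and (iii) the BCHM theorem converts the weak Fano property into the Mori dream space conclusion.

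First I would establish smoothness. Since $v$ is primitive, $M_H(v)=M_H^s(v)$, so at any point $[F]$ the deformation space is $\mathrm{Ext}^1(F,F)$ with obstruction in $\mathrm{Ext}^2(F,F)$. By Serre duality $\mathrm{Ext}^2(F,F)^\vee\cong\mathrm{Hom}(F,F\otimes K_X)$, which vanishes because $F$ is stable and $K_X$ is anti-ample (so $F\otimes K_X$ has strictly smaller slope than $F$). Hence $M_H(v)$ is smooth of the expected dimension computed by Riemann-Roch.

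Next I would identify $-K_{M_H(v)}$ with the determinant line bundle $\mathcal{L}_1=\lambda(u_1)$. The Kodaira-Spencer map gives $T_{M_H(v)}\cong\mathcal{E}xt^1_p(\mathcal{E},\mathcal{E})$ for a (local) universal family, so $-K_{M_H(v)}=\det\mathcal{E}xt^1_p(\mathcal{E},\mathcal{E})$; by the vanishing of $\mathcal{E}xt^2_p(\mathcal{E},\mathcal{E})$ (fiberwise $\mathrm{Ext}^2$ vanishes, as above) and the identification $\mathcal{H}om_p(\mathcal{E},\mathcal{E})\cong\mathcal{O}_{M_H(v)}$, we can rewrite this first Chern class as $-c_1(Rp_*(\mathcal{E}^\vee\otimes^L\mathcal{E}))$. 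Applying Grothendieck-Riemann-Roch to the trivial family $X\times M_H(v)\to M_H(v)$, and separately computing $c_1(\lambda(u_1))$ by GRR from Definition 3.1 using $\mathrm{ch}(u_1)=(0,-rH,\tfrac{rH^2}{2}+c_1\cdot H)$, one sees that the two classes differ by an expression $p_*\{-\tfrac12 p^*c_1^2(Q)\cdot q^*H\}$ after writing $c_1(\mathcal{E})=p^*c_1(Q)+q^*c_1$. Because $q^*H$ has pure $X$-degree less than two, the Gysin pushforward $p_*$ kills this term (alternatively, the dependence on the choice of universal family cancels because $u_1\in v^\perp$). This gives the numerical equality $-K_{M_H(v)}\equiv\mathcal{L}_1$.

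With this identification, $-K_{M_H(v)}$ inherits the big-and-nef property of $\mathcal{L}_1$ established by J.~Li in \cite{L1}, where for $m\gg 0$ the linear system $|\mathcal{L}_1^m|$ is base point free and defines the contracting morphism to the Donaldson-Uhlenbeck compactification $M^{\mu ss}(v)$. Hence $M_H(v)$ is a smooth weak Fano variety, and by the BCHM theorem \cite{BCHM} it is therefore a Mori dream space. The main technical step is the GRR computation, and the subtle point is keeping track of the universal-sheaf ambiguity — this is precisely what the condition $u_1\in v^\perp$ is designed to neutralize, so once the bookkeeping is set up correctly the cancellation is automatic.
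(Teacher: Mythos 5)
Your proposal is correct and follows essentially the same route as the paper: smoothness via the Serre-duality vanishing of $\mathrm{Ext}^2(F,F)$ on a Del Pezzo surface, the Grothendieck--Riemann--Roch identification of $c_1(-K_{M_H(v)})$ with $c_1(\mathcal{L}_1)$ (including the same cancellation of the $p_*\{-\tfrac12 p^*c_1^2(Q)\cdot q^*H\}$ term and the role of $u_1\in v^{\perp}$ in neutralizing the universal-family ambiguity), then J.~Li's big-and-nef result for $\mathcal{L}_1$ and BCHM for the Mori dream space conclusion. No gaps; this matches the paper's argument in Sections 3.1--3.3.
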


\subsection{The case of $\bb{P}^2$}

A lot more is known in the case $X=\bb{P}^2$. First we have an explicit description on $v$ such that $M_H(v)$ is nonempty. Secondly, $M_H(v)$ is always irreducible and locally factorial and we have a explicit description of its Picard group thanks to the work of Drezet and Le Potier \cite{D}, \cite{DLP}. These properties allow us to remove the assumption that $v$ is of primitive type.  In this subsection, we summarize these properties of $M_H(v)$ (for arbitrary $v$), which will be used in this paper and refer to \cite{LP} for proofs.

\begin{definition}A (semi)stable sheaf $F$ on $\bb{P}^2$ is called (semi)exceptional if 
$$\rm{Ext}^1(F,F)=0.$$
\end{definition}

 All exceptional sheaves are bundles and if there exists an exceptional bundle of topological type $v$ then $M_H(v)$ is reduced to a point.

Drezet and Le Potier \cite{DLP} gave a necessary and sufficient condition for the existence of semistable torsion free sheaves of fixed topological type. They constructed a function $\delta:\rightarrow[\frac{1}{2},1]$ which is periodic of periodic of period $1$ and Lipschitz-continuous. We refer to \cite{LP} chapter 16 for the precise definition of $\delta$.

The main result is that

\begin{theorem}$($\cite{DLP}$)$ The necessary and sufficient condition for the existence of non-exceptional semistable sheaf of slope $\mu$ and discriminant $\Delta=c_1^2-2rch_2$ is
$$\Delta\ge\delta(\mu).$$
\end{theorem}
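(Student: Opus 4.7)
The plan is to prove both directions via the interplay between exceptional bundles and semistable sheaves on $\bb{P}^2$. The exceptional bundles on $\bb{P}^2$ are indexed by dyadic rationals via the Markov tree of mutations, and $\delta$ is constructed precisely so that $\delta(\mu)$ records the minimum discriminant compatible with a family of Hom-vanishing inequalities against exceptional bundles of slope close to $\mu$. With this in hand, the theorem becomes a statement that the existence region in the $(\mu,\Delta)$-plane is exactly the complement of a countable union of ``forbidden'' half-planes cut out by exceptional bundles, with boundary given by the graph of $\delta$.

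For the necessity direction, I would take a non-exceptional semistable torsion-free sheaf $E$ with slope $\mu(E)=\mu$ and discriminant $\Delta(E)=\Delta$, and for every exceptional bundle $F$ of slope $\alpha$ close to $\mu$ exploit the fact that semistability of $E$ together with stability of $F$ forces either $\mathrm{Hom}(F,E)=0$ or $\mathrm{Hom}(E,F(-3))=0$, depending on the sign of $\mu-\alpha$. Combined with Serre duality on $\bb{P}^2$, where $K_{\bb{P}^2}=\cal{O}(-3)$, these vanishings upgrade to $\chi(F,E)\le 0$. Applying Riemann--Roch to $\chi(F,E)$, substituting the topological type $(r,c_1,ch_2)$, and optimizing over all nearby exceptional bundles $F$ then yields $\Delta \ge \delta(\mu)$. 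The periodicity and Lipschitz continuity of $\delta$ drop out of this estimate because the exceptional bundles are themselves closed under tensoring with $\cal{O}(1)$ and their slopes accumulate densely in every interval.

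For the sufficiency direction, given $(\mu,\Delta)$ with $\Delta\ge\delta(\mu)$, I would construct a semistable sheaf with these invariants by selecting three exceptional bundles $F_{-1},F_0,F_1$ whose slopes form a helix bracketing $\mu$ and realizing $E$ as the middle cohomology of a monad
$$F_{-1}^{\oplus a_{-1}} \longrightarrow F_0^{\oplus a_0} \longrightarrow F_1^{\oplus a_1},$$
where the multiplicities $a_i$ are dictated by the Euler pairings $\chi(F_i,v)$ with the desired topological type $v=(r,c_1,ch_2)$. One then has to verify that a generic such complex really is a monad, in the sense that its cohomology is concentrated in the middle term, and that the resulting sheaf is semistable. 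This step uses Beilinson-type resolutions and the fact that any full exceptional helix generates $D^b(\bb{P}^2)$.

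The main obstacle is precisely this sufficiency step: showing that a generic complex of the prescribed shape has vanishing outer cohomology and yields a genuinely semistable rather than merely torsion-free sheaf. This requires a delicate dimension count on the Hom-spaces between the chosen exceptional bundles and the argument that any destabilizing subobject would impose conditions of positive codimension on the parameter space of monads. This is the technical heart of Drezet--Le Potier's original work, and for the purposes of this paper we invoke their construction as presented in \cite{LP}.
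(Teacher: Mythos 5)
The paper offers no proof of this theorem: it is quoted from \cite{DLP}, with the definition of $\delta$ and all arguments deferred to \cite{LP}, Chapter 16. So there is no in-paper argument to compare yours against; what you have written is a broadly faithful outline of the strategy of the \emph{cited} proof (necessity from Euler-characteristic inequalities against exceptional bundles, sufficiency from monads on exceptional triples). One concrete correction to the necessity step: to conclude $\chi(F,E)\le 0$ you need \emph{both} $\mathrm{Hom}(F,E)=0$ and $\mathrm{Ext}^2(F,E)\cong\mathrm{Hom}(E,F\otimes\cal{O}(-3))^{\vee}=0$ to hold \emph{simultaneously}, since $\chi(F,E)=\dim\mathrm{Hom}(F,E)-\dim\mathrm{Ext}^1(F,E)+\dim\mathrm{Ext}^2(F,E)$ and only the middle term has a favorable sign. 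Killing just one of the two, as your ``either $\mathrm{Hom}(F,E)=0$ or $\mathrm{Hom}(E,F(-3))=0$ depending on the sign of $\mu-\alpha$'' suggests, gives no inequality at all. The correct mechanism is that both vanish at once precisely when the exceptional slope $\alpha$ lies in a window of width $3$ on one side of $\mu$ (slope comparison kills $\mathrm{Hom}$, Serre duality plus the opposite slope comparison kills $\mathrm{Ext}^2$), and $\delta(\mu)$ is assembled as the supremum of the resulting Riemann--Roch lower bounds on $\Delta$ over all exceptional bundles whose slopes fall in these windows.

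The more serious issue is that the sufficiency direction, which you correctly identify as the technical heart, is described rather than argued: no mechanism is given for why the generic complex on the chosen exceptional triple is a monad, nor for why its cohomology sheaf is semistable rather than merely torsion-free, and you then explicitly invoke \cite{LP} for exactly these points. That makes the proposal a citation with commentary rather than an independent proof --- which is defensible, since it is precisely what the paper itself does, but it should be flagged as such. If you intend to supply a proof, the dimension count showing that a destabilizing subsheaf imposes positive-codimension conditions on the parameter space of monads is the part you actually have to write, and it occupies the bulk of the original Drezet--Le Potier argument.
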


If $M_H(v)$ is nonempty, we also know the following properties,

\begin{theorem} $($\cite{D}$)$$($\cite{DLP}$)$ If nonempty, $M_H(v)$ is always irreducible, normal and locally factorial.


If $\Delta>\delta(\mu)$, the complement of $M^s_H(v)$ in $M_H(v)$ is of codimension at least $2$ and  the homomorphism in definition \ref{def3.1}
$$\lambda:v^{\perp}\rightarrow Pic(M^s_H(v))\cong Pic(M_H(v))$$
 is an isomorphism.
 
 If $\Delta=\delta(\mu)$, $Pic(M_H(v))$ is free abelian group of rank $1$.
\end{theorem}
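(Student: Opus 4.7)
The plan is to attack each statement separately, exploiting two features specific to $\bb{P}^2$: the Beilinson-type resolutions of coherent sheaves, and the resulting realization of $M_H(v)$ as a GIT quotient of an open subset of a quiver representation space. This is the classical Drezet--Le Potier setup and it is the backbone of the entire proof. Everything reduces to questions about the geometry of a well-understood affine GIT problem.

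For irreducibility, I would fix a suitable exceptional collection $(\cal{O}(k-2),\cal{O}(k-1),\cal{O}(k))$ and note that for $k$ sufficiently large (depending on $v$), every $H$-semistable sheaf of topological type $v$ appears as the cohomology of a three-term monad with dimension vector determined by $v$. The parameter scheme of such monads is an open subset of an affine space, hence irreducible, and the natural map to $M_H(v)$ is surjective on closed points; irreducibility of $M_H(v)$ follows. Smoothness on $M_H^s(v)$ has already been established in Section 3.1 via $\rm{Ext}^2(F,F)=0$. Normality is then supplied by Boutot's theorem, since the GIT quotient of a smooth variety by a reductive group has rational (hence normal) singularities. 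Local factoriality, however, is more delicate: it requires Drezet's descent criterion, which says that a $G$-linearized line bundle on the parameter scheme descends to the quotient if and only if its $G$-character is trivial on the stabilizers of all closed semistable orbits, combined with a codimension estimate showing that $M_H(v)\setminus M_H^s(v)$ has codimension at least $2$ when $\Delta>\delta(\mu)$. The codimension bound is a direct dimension count for Jordan--H\"older filtrations of length at least $2$, comparing the expected dimension of such a stratum against that of $M_H(v)$ itself.

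For the Picard group, the map $\lambda: v^{\perp}\rightarrow \rm{Pic}(M_H^s(v))$ is surjective by Kempf descent: the condition $u\in v^{\perp}$ is exactly what forces the $G$-character of the natural linearized line bundle on the parameter space to be trivial on stabilizers of polystable points, so by the criterion above every line bundle on $M_H^s(v)$ arises this way. Injectivity is verified by pairing $c_1(\lambda(u_i))$ against explicit test curves in $M_H^s(v)$, for instance curves of extensions or Hecke-type modifications, where the intersection numbers can be computed directly from the definition of $\lambda$ and shown to be linearly independent for a basis of $v^{\perp}$. Combined with the codimension-two claim and normality, the isomorphism extends from $M_H^s(v)$ to $M_H(v)$.

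The main obstacle is the boundary case $\Delta=\delta(\mu)$. Here the moduli space sits on the wall of the Drezet--Le Potier existence cone, the strictly semistable locus is no longer of codimension two, and $\lambda$ develops a one-dimensional kernel because one of the $\cal{L}_i$ becomes numerically proportional to the other (reflecting the relation forced by the exceptional bundle attached to $\mu$). Verifying that $\rm{Pic}(M_H(v))=\bb{Z}$ rather than something larger requires an explicit identification of the moduli space at the boundary in terms of the Drezet--Le Potier exceptional bundle, and a direct computation showing that the single surviving determinant class generates $\rm{Pic}$. This boundary analysis is by far the most technical part of the program, and it is where I would expect to spend the bulk of the effort.
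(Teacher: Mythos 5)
First, note that the paper does not prove this theorem at all: it is quoted from Drezet \cite{D} and Drezet--Le Potier \cite{DLP}, with the reader referred to \cite{LP} (Chapters 16--18) for proofs. So the comparison here is really between your outline and the literature it cites, and on that score your sketch follows the right general program (monad/quiver descriptions, descent of linearized line bundles, a codimension-two estimate for the strictly semistable locus, and a separate analysis of the boundary case $\Delta=\delta(\mu)$).

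There is, however, a genuine gap in your irreducibility step. You assert that the parameter scheme of three-term monads with line-bundle terms is ``an open subset of an affine space, hence irreducible.'' It is not: the monad condition (vanishing of the composite of the two differentials) imposes quadratic equations on the affine space of pairs of maps, and the resulting closed subscheme is in general badly reducible --- indeed this very paper emphasizes (Section 2) that the analogous affine scheme underlying the quiver/Bridgeland moduli is ``highly reducible'' and that irreducibility of those GIT quotients is \emph{not} known. Irreducibility of the Gieseker moduli on $\bb{P}^2$ is a real theorem whose proof (Le Potier, building on Ellingsrud, Hirschowitz, Maruyama) proceeds differently: one shows the relevant open locus of the Quot scheme is \emph{smooth} (using $\mathrm{Ext}^2(F,F)=\mathrm{Hom}(F,F(-3))^{\lor}=0$ for semistable $F$, so that irreducible equals connected) and then establishes \emph{connectedness} by induction on the discriminant, via elementary modifications and the double-dual stratification. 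That connectedness argument is the actual content, and your proposal has no substitute for it. A secondary, smaller issue: your appeal to Boutot for normality presupposes the parameter scheme is smooth or has rational singularities, which again cannot be read off from an ``open subset of affine space'' description; it follows instead from the smoothness of the Quot-scheme locus just mentioned. The remaining parts of your outline (Drezet's descent criterion for local factoriality and for surjectivity of $\lambda$, test curves for injectivity, the exceptional-bundle analysis when $\Delta=\delta(\mu)$) are consistent with how the cited results are actually proved.
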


\begin{remark} If $\Delta=\delta(\mu)$, it could happen that the complement of $M^s_H(v)$ in $M_H(v)$ is of codimension $1$. Nevertheless, the homomorphism $\lambda$ is still well defined and is an epimorphism. (c.f \cite{LP} 18.3.)
\end{remark}

If $\Delta=\delta(\mu)$,  $M_H(v)$ is automatically a Mori dream space since its Picard group is free abelian of rank 1. We will be mostly interested in the $\Delta>\delta(\mu)$ case (i.e Picard number is $2$). Although $M_H(v)$ may not be smooth, it is locally factorial and the complement of $M^s_H(v)$ has codimension at least $2$. So the calculation of canonical class of $M^s_H(v)$ in section 3.3 will extend to $M_H(v)$ and therefore $M_H(v)$ is still a Mori dream space.

Running a directed MMP on a Mori dream space $M$ of Picard number $2$ is straightforward. Recall that the movable cone
$\overline{Mov}(M)\subset N^1(M)_{\bb{R}}$ is (the closure of) the cone spanned by all line bundles $L$ whose stable base locus is of codimension at least $2$ in $M$. We have
$$Nef(M)\subset \overline{Mov}(M)\subset \overline{NE}^1(M).$$
For any big divisor $D$, draw a line connecting $D$ with an ample divisor $A$ on $M$. This line will cross finitely many walls between Mori chambers. The wall crossing corresponds to two different birational models of $M$. There are two cases. If the wall lies in the interior of $\overline{Mov}(M)$, then the corresponding birational map $M_i\dashrightarrow M_j$ is a $D$-flip. If the wall happen to be one boundary of $\overline{Mov}(M)$ (but not on the boundary of $\overline{NE}^1(M)$), it corresponds to a divisorial contraction 
$$M_j\rightarrow M_j'$$
on some $M_j$ contracting some irreducible divisor $B$ and there is just one more Mori chamber outside of $\overline{Mov}(M)$ generated by (the pull-back to $M$ of) $Nef(M'_j)$ and (the strict transform of) $B$. Thus if $D\in \overline{Mov}(M)$, after finitely many $D$-flips we ended up with a model birational model $M_j$ on which (the strict transform of) $D$ is semiample, or if $D\notin \overline{Mov}(M)$, after finitely many $D$-flips, we have a divisorial contraction
$$\pi: \ M_j\rightarrow M_j'$$
such that $D\in\pi^*(Nef(M_j'))+B$.

\section{Determinant line bundles on the Bridgeland moduli}
Now let $X=\bb{P}^2$. We will assume still $v=(r,c_1,ch_2)$ is primitive and think of the Chern characters as numbers. The potential wall associated to a pair of Chern characters:
$$v=(r,c,d)\ \text{and } v'=(r',c',d')$$
is the following subset of the upper-half plane:
$$W_{v,v'}:=\{(s,t)\ |\ \mu_{s,t}(v)=\mu_{s,t}(v')\}$$
where $\mu_{s,t}$ is the slope function defined in section 2. Specifically,

$$\mu_{s,t}(r,c,d)=\frac{\frac{-t^2}{2}r+(d-sc+\frac{s^2}{2}r)}{t(c-sr)}.$$

so the wall is given by:

$$W_{v,v'}=\{(s,t)|(s^2+t^2)(rc'-r'c)-2s(rd'-r'd)+2(cd'-c'd)=0\}$$

We will only be interested in walls in the region where $s<\frac{c_1\cdot H}{r}$, because a Mumford-stable
torsion-free sheaf $E$ of degree $c_1$ only belongs to the category $\cal{A}_s$ if $s < \frac{c_1\cdot H}{r}$. The potential walls are nested semi circles in this region. 

Walls are significant because if $E$, $F$ are objects of $\cal{D}^b(Coh(\bb{P}^2))$ with
$$ch(F)=v\ \text{and}\ ch(E)=v'$$
and if $F$ is a sub-object of $E$ in $\cal{A}_s$ with $(s,t)\in W_{v,v'}$, then $E$ is not $(s,t)$-stable by definition, but it {\bf may} be $(s,t)$-semistable and stable for nearby points on one side of the wall, but not the other. Crossing the wall would therefore change the set of $(s,t)$-stable objects.

Since the $(s,t)$-moduli of fixed topological type is constant along any potential wall and each potential wall will meet a 'quiver region' (c.f \cite{ABCH} section 7), every $(s,t)$-moduli space is a quiver moduli with respect to some polarization and for each $(s,t)$-(semi)stable objects $E$ of topological type $v$, either $E$ or $E[1]$ lies in the quiver heart
$$\cal{A}(k):=\langle\cal{O}_{\bb{P}^2}(k-2)[2],\ \cal{O}_{\bb{P}^2}(k-1)[1],\ \cal{O}_{\bb{P}^2}(k)\rangle$$ for suitable $k$. 
So either $E$ or $E[1]$ is of the form
$$\cal{O}_{\bb{P}^2}(k-2)^{n_0}\rightarrow\cal{O}_{\bb{P}^2}(k-1)^{n_1}\rightarrow\cal{O}_{\bb{P}^2}(k)^{n_2}$$

The transition from $\vec{n}$ to $\pm v$ is given by the matrix

 $$\left[
 \begin{matrix}
 1&-1&1\\
 k-2&-(k-1)&k\\
 \frac{(k-2)^2}{2}&\frac{-(k-1)^2}{2}&\frac{k^2}{2}
 \end{matrix}
 \right]
 $$

For each $(s,t)$ on the potential wall $W_{v,v'}$ (there could be more than one $v'$ giving the same wall, but $span\{v',v\}$ is determined by $(s,t)$), we associate a {\bf canonical} determinant line bundle on the Bridgeland moduli in the following manner.

Let $w_{s,t}$ be an integral topological type (up to scalar) in $H^*(X,\bb{R})_{alg}$ perpendicular to $v$ and $v'$ under the non-degenerate pairing $\chi$. Since on the plane $P=span\{v',v\}$, $\mu_{s,t}$ is constant, we can choose an orientation of $w_{s,t}$ such that $\chi(w_{s,t},ch(C))>0$ for any objects $C\in\cal{A}_s$ with $\mu_{s,t}(C)<\mu_{s,t}(v)$. Finally, choose a complex $F_{s,t}$ such that $ch(F_{s,t})=w_{s,t}$.

For a flat family of $(s,t)$-semistable complexes $\cal{E}$ of topological type $v$ (or $-v$) on $\bb{P}^2\times S$: 
\begin{eqnarray}\label{quiver}
U(k-2)\longrightarrow V(k-1)\longrightarrow W(k)
\end{eqnarray}
either $\cal{E}|_b\in\cal{A}_s$ or $\cal{E}[-1]|_b\in \cal{A}_s$ for any $b\in S$.

 We associate a line bundle on $S$ the same way as in definition \ref{def3.1}:
$$\lambda_{s,t}:=det(p_!([q^*F_{s,t}]\cdot[\cal{E}])) \text{ if } \cal{E}|_b\in \cal{A}_s$$
or
$$\lambda_{s,t}:=det(p_!([q^*F_{s,t}]\cdot[\cal{E}[-1]]) \text{ if } \cal{E}[-1]|_b\in \cal{A}_s$$

\begin{lemma}\label{lemma4.1} If $\cal{E}|_b\in\cal{A}_s$ for any $b\in S$, then 
$$\lambda_{s,t}=(\wedge^{n_0}U)^{a_0}\otimes(\wedge^{n_1}V)^{a_1}\otimes(\wedge^{n_2}W)^{a_2}$$
for some $\vec{a}=(a_0,a_1,a_2)$ and $a_i=(-1)^i\ \chi(w_{s,t},\cal{O}_{\bb{P}^2}(k-2+i))$.
\end{lemma}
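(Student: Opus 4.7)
The plan is to unwind the definition of $\lambda_{s,t}$ via a direct $K$-theoretic computation. First I would rewrite the class of $\cal{E}$ in $K^0(\bb{P}^2\times S)$: since the family is presented by the three-term complex of locally free sheaves in (\ref{quiver}), its class is the alternating sum
$$[\cal{E}]=\sum_{i=0}^{2}(-1)^i\,q^*[\cal{O}_{\bb{P}^2}(k-2+i)]\cdot p^*[\cal{B}_i],$$
where $\cal{B}_0=U$, $\cal{B}_1=V$, $\cal{B}_2=W$.

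Next I would multiply by $q^*[F_{s,t}]$ and push forward under $p$. Using the projection formula together with flat base change, which gives $p_!q^*\alpha=\chi(\bb{P}^2,\alpha)\cdot[\cal{O}_S]$ (valid because $p$ is flat and proper with fiber $\bb{P}^2$), one obtains
$$p_!\bigl([q^*F_{s,t}]\cdot[\cal{E}]\bigr)=\sum_{i=0}^{2}(-1)^i\,\chi\bigl([F_{s,t}]\cdot[\cal{O}_{\bb{P}^2}(k-2+i)]\bigr)\cdot[\cal{B}_i]\quad\text{in } K^0(S).$$
By the definition of the pairing $\chi$ on $K(\bb{P}^2)_{\bb{Q}}$, the scalar $\chi\bigl([F_{s,t}]\cdot[\cal{O}_{\bb{P}^2}(k-2+i)]\bigr)$ is precisely $\chi(w_{s,t},\cal{O}_{\bb{P}^2}(k-2+i))$.

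Finally I would apply the determinant homomorphism $\det\colon K^0(S)\to Pic(S)$, which converts sums in $K$-theory into tensor products in $Pic(S)$ and sends $m\cdot[\cal{B}_i]$ to $(\wedge^{n_i}\cal{B}_i)^m$. Collecting terms yields
$$\lambda_{s,t}=\bigotimes_{i=0}^{2}(\wedge^{n_i}\cal{B}_i)^{a_i},\qquad a_i=(-1)^i\chi(w_{s,t},\cal{O}_{\bb{P}^2}(k-2+i)),$$
which is exactly the claimed formula. I do not anticipate any real obstacle: the argument is pure bookkeeping combining the projection formula, flat base change, and the multiplicativity of $\det$ on $K$-theory. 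As a consistency check, the orthogonality $\chi(w_{s,t},v)=0$ built into the choice of $w_{s,t}$ forces $\sum_i(-1)^in_i\chi(w_{s,t},\cal{O}_{\bb{P}^2}(k-2+i))=\sum_i a_in_i=0$, which is the required quiver-polarization constraint connecting $\lambda_{s,t}$ to the natural ample line bundle of the GIT construction recalled at the end of Section 2.
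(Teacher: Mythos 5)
Your proof is correct and follows essentially the same route as the paper's: decompose $[\cal{E}]$ term by term in $K$-theory, use the projection formula to identify $p_!\bigl([q^*F_{s,t}]\cdot[\cal{B}_i(k-2+i)]\bigr)$ with a multiple of $[\cal{B}_i]$, and compute that multiple as the pairing $\chi(w_{s,t},\cal{O}_{\bb{P}^2}(k-2+i))$ before applying $\det$. The only cosmetic difference is that the paper extracts this coefficient as $ch_0$ of the pushforward via Grothendieck--Riemann--Roch, whereas you read it off directly from the triviality of the product family; the two computations are the same.
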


\begin{proof} We have
\begin{eqnarray}p_!([q^*F_{s,t}]\cdot[U(k-2)])&=&p_!([q^*F_{s,t}\otimes q^*\cal{O}_{\bb{P}^2}(k-2)\otimes p^*U])\nonumber\\
&=&[U\otimes p_!(q^*F_{s,t}(k-2)])]\nonumber
\end{eqnarray}
and
$$[p_!(q^*(F_{s,t}(k-2))]=[\cal{O}_S^{\oplus a_0}]$$
So the first statement is clear.
To determine $a_0$, we apply Grothendieck-Riemann-Roch,
\begin{eqnarray}a_0&=&ch_0(p_!(q^*(F_{s,t}(k-2)))\nonumber\\&=&p_*\{q^*ch(F_{s,t})\cdot q^*ch(\cal{O}_{\bb{P}^2}(k-2))\cdot q^*td(T_{\bb{P}^2})\}_2\nonumber\\
&=&p_*\{(q^*(w_{s,t}\cdot ch(\cal{O}_{\bb{P}^2}(k-2))\cdot td(T_{\bb{P}^2}))\}_2\nonumber\\
&=&\chi(w_{s,t},\cal{O}_{\bb{P}^2}(k-2))\nonumber
\end{eqnarray}

One can similarly prove the formula for the $V$, $W$ terms.

\end{proof}

\begin{remark} If fiber of $\cal{E}[-1]$ is in $\cal{A}_s$,, we have an extra negative sign in front of the formula for $a_i$ due to the choice of the orientation of $w_{s,t}$.

\end{remark}

Lemma \ref{lemma4.1} implies that 
$$a_0n_0+a_1n_1+a_2n_2=\chi(w_{s,t},[\cal{O}_{\bb{P}^2}(k-2)^{n_0}\rightarrow\cal{O}_{\bb{P}^2}(k-1)^{n_1}\rightarrow\cal{O}_{\bb{P}^2}(k)^{n_2}])=\chi(w_{s,t},\pm v)=0$$
and therefore we can talk about quiver stable objects of dimension vector $\vec{n}$ with respect to polarization $\vec{a}$.

\begin{lemma}\label{lemma4.2} If $F\in\cal{A}_s$, then $\chi(w_{s,t},ch(F))>0$ (resp. $<0$) if and only if $\mu_{s,t}(F)<(resp. >)\ \mu_{s,t}(v)$.
\end{lemma}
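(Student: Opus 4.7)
The approach is to decompose Chern characters relative to the plane $P=\mathrm{span}(v,v')\subset H^*(\bb{P}^2,\R)_{\mathrm{alg}}$ and show that both $\chi(w_{s,t},ch(F))$ and the sign of $\mu_{s,t}(F)-\mu_{s,t}(v)$ are proportional to the transverse coefficient, with a ratio pinned down by the chosen orientation of $w_{s,t}$.

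First I would observe that on the wall, $Z_{s,t}(v)$ and $Z_{s,t}(v')$ are parallel, so $Z_{s,t}$ carries $P$ into the real line $\R\cdot Z_{s,t}(v)$; in fact $P = Z_{s,t}^{-1}(\R\cdot Z_{s,t}(v))$ because $Z_{s,t}$ is a real-linear map $H^*(\bb{P}^2,\R)_{\mathrm{alg}}\to\C$ with $1$-dimensional kernel. In particular $w_{s,t}\notin P$. Since the nondegenerate functional $\chi(w_{s,t},-)$ has a $2$-dimensional kernel containing the $2$-dimensional $P$, one gets $P=\ker\chi(w_{s,t},-)$, hence $\chi(w_{s,t},w_{s,t})\neq 0$. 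Writing $ch(F)=p+\alpha\,w_{s,t}$ with $p\in P$ and $\alpha\in\R$ then gives
$$\chi(w_{s,t},ch(F))=\alpha\cdot\chi(w_{s,t},w_{s,t}).$$

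Next I would introduce the real-linear functional $\ell(u):=\Im\bigl(\overline{Z_{s,t}(v)}\,Z_{s,t}(u)\bigr)$ on $H^*(\bb{P}^2,\R)_{\mathrm{alg}}$. It vanishes on $P$ and, by the same description of $P$, is nonzero on $w_{s,t}$; thus $\ell(F)=\alpha\cdot\ell(w_{s,t})$. Writing the central charges in polar form $Z_{s,t}(v)=\rho_v e^{i\pi\phi_v}$, $Z_{s,t}(F)=\rho_F e^{i\pi\phi_F}$ with phases in $(0,1]$ (valid for $F\in\cal{A}_s$), one finds $\ell(F)=\rho_v\rho_F\sin\bigl(\pi(\phi_F-\phi_v)\bigr)$. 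Because $\phi_F-\phi_v\in(-1,1)$ and $\mu_{s,t}=-\cot(\pi\phi)$ is strictly increasing in $\phi$, this yields the equivalence
$$\ell(F)>0\iff\mu_{s,t}(F)>\mu_{s,t}(v),$$
with the analogous statements for $<$ and $=$.

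Finally, since both $\chi(w_{s,t},ch(F))$ and $\ell(F)$ are proportional to $\alpha$ with nonzero $F$-independent constants, the two quantities have a fixed sign ratio. The orientation of $w_{s,t}$ was fixed by demanding $\chi(w_{s,t},ch(C))>0$ for some (hence, by the proportionality, every) $C\in\cal{A}_s$ with $\mu_{s,t}(C)<\mu_{s,t}(v)$; combined with the previous step this forces $\chi(w_{s,t},w_{s,t})$ and $\ell(w_{s,t})$ to have opposite signs, and the lemma follows. The main subtlety is checking that the orientation convention for $w_{s,t}$ is actually consistent across all such test objects $C$ — but this consistency drops out of the proportionality of $\chi(w_{s,t},ch(-))$ and $\ell(-)$ as functionals, so it costs nothing extra.
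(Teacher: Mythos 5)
Your argument is in substance the paper's own: the published proof is a two-line version of exactly this observation (that $\mathrm{span}\{v,v'\}$ is the locus where $\mu_{s,t}$ is constant equal to $\mu_{s,t}(v)$, after which the orientation of $w_{s,t}$ does the rest), and your functional $\ell(u)=\Im\bigl(\overline{Z_{s,t}(v)}\,Z_{s,t}(u)\bigr)$ is the right device for making the sign comparison precise. One step, however, is asserted rather than proved: ``in particular $w_{s,t}\notin P$.'' This does not follow from $P=Z_{s,t}^{-1}(\mathbb{R}\cdot Z_{s,t}(v))$; it is equivalent to the restriction of the symmetric pairing $\chi$ to $P$ being nondegenerate, i.e.\ to $P$ not being tangent to the null cone of the signature-$(2,1)$ form $\chi$ on $H^*(\mathbb{P}^2,\mathbb{R})_{alg}$, and nothing in the setup obviously rules that out. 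Fortunately you do not need it. Both $\chi(w_{s,t},-)$ and $\ell$ are nonzero linear functionals on the three-dimensional space $H^*(\mathbb{P}^2,\mathbb{R})_{alg}$ whose kernels contain, hence equal, the two-dimensional plane $P$; therefore they are proportional with a nonzero constant, the orientation convention fixes the sign of that constant to be negative, and the lemma follows. Replacing the decomposition $ch(F)=p+\alpha\,w_{s,t}$ (which genuinely requires $w_{s,t}\notin P$) by this direct proportionality of functionals closes the gap without changing anything else in your argument, and it still delivers the consistency of the orientation convention across test objects that you correctly flag at the end.
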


\begin{proof}The plane $span\{v,v'\}$ in $H^*(X,\bb{Q})_{alg}$ is where $\mu_{s,t}$ equals constant $\mu_{s,t}(v)$. The conclusion follows from  the  choice of orientation of $w_{s,t}$.
\end{proof}
We prove
\begin{prop} An object $E=[\cal{O}_{\bb{P}^2}(k-2)^{n_0}\rightarrow\cal{O}_{\bb{P}^2}(k-1)^{n_1}\rightarrow\cal{O}_{\bb{P}^2}(k)^{n_2}]$ is quiver (semi)stable with respect to polarization $\vec{a}$ if and only if it is $(s,t)$-(semi)stable.
\end{prop}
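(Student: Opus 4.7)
The plan is to identify both stability notions with a common sign condition on the pairing $\chi(w_{s,t},-)$, which by Lemma \ref{lemma4.2} measures the gap between the Bridgeland slope of a subobject and the slope of $E$. First I would observe that for any sub-complex $F$ of $E$ in $\cal{A}(k)$ with dimension vector $(b_0,b_1,b_2)$, the shifts $[2]$, $[1]$, $[0]$ built into the exceptional triple force
$$ch(F)=b_0\,ch(\cal{O}_{\bb{P}^2}(k-2))-b_1\,ch(\cal{O}_{\bb{P}^2}(k-1))+b_2\,ch(\cal{O}_{\bb{P}^2}(k)),$$
so the very definition $a_i=(-1)^i\chi(w_{s,t},\cal{O}_{\bb{P}^2}(k-2+i))$ immediately yields
$$a_0b_0+a_1b_1+a_2b_2=\chi(w_{s,t},ch(F)).$$
Thus the quiver polarization inequality for $F$ is literally an inequality for $\chi(w_{s,t},ch(F))$.

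Combining this identity with Lemma \ref{lemma4.2} applied to the sub-complex $F\in\cal{A}(k)\subseteq\cal{A}_s$ gives
$$a_0b_0+a_1b_1+a_2b_2>(\text{resp.\ }\geq)\,0\iff\mu_{s,t}(F)<(\text{resp.\ }\leq)\,\mu_{s,t}(E).$$
The direction ``$(s,t)$-(semi)stable implies quiver (semi)stable'' follows immediately: every sub-complex of $E$ in $\cal{A}(k)$ is in particular an $\cal{A}_s$-subobject, so the Bridgeland slope inequality forces the polarization inequality. For the converse, given a nonzero proper $\cal{A}_s$-subobject $F\hookrightarrow E$, I would argue that $F$ itself lies in $\cal{A}(k)$ with the inclusion realized as a sub-complex; its dimension vector $\vec{b}$ then satisfies the polarization inequality, and the chain of equivalences above translates back into $\mu_{s,t}(F)<\mu_{s,t}(E)$.

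The main obstacle is precisely this last categorical step, namely matching the lattice of $\cal{A}_s$-subobjects of $E$ with the lattice of $\cal{A}(k)$-sub-complexes. This is where the assumption that $(s,t)$ lies in a quiver region for the exceptional triple $(\cal{O}_{\bb{P}^2}(k-2),\cal{O}_{\bb{P}^2}(k-1),\cal{O}_{\bb{P}^2}(k))$ plays an essential role: one needs $\cal{A}(k)$ to sit as a full abelian subcategory of $\cal{A}_s$ that is closed under the formation of $\cal{A}_s$-subobjects, which amounts to checking that every $(s,t)$-semistable Jordan--H\"older factor of $E$ is an iterated extension within the exceptional triple. This is exactly what is established in \cite{ABCH}, section 7, and once it is in hand the proposition reduces to the numerical identification carried out in the first paragraph.
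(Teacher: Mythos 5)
Your numerical identity $a_0b_0+a_1b_1+a_2b_2=\chi(w_{s,t},ch(F))$ is correct and is exactly the translation the paper uses (it is how Lemma \ref{lemma4.1} yields $\vec{a}\cdot\vec{n}=0$), and invoking Lemma \ref{lemma4.2} to convert the sign of $\chi(w_{s,t},-)$ into a slope comparison is also the paper's strategy. The gap is the categorical step, and your proposed repair rests on a false premise: $\cal{A}(k)$ is \emph{not} a full abelian subcategory of $\cal{A}_s$, and neither heart contains the other. They are related by a tilt, $\cal{A}(k)=\langle\cal{P}_{s,t}(0,\alpha][1],\cal{P}_{s,t}(\alpha,1]\rangle$ versus $\cal{A}_s=\cal{P}_{s,t}(0,1]$, so their intersection is only $\cal{P}_{s,t}(\alpha,1]$ (for instance $\cal{O}_{\bb{P}^2}(k-2)[2]\in\cal{A}(k)$ never lies in $\cal{A}_s$). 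Consequently a sub-complex of $E$ in $\cal{A}(k)$ need not be an $\cal{A}_s$-subobject, an $\cal{A}_s$-subobject need not lie in $\cal{A}(k)$, and Lemma \ref{lemma4.2} cannot be applied to an arbitrary quiver subobject. Moreover $E$ itself may lie in $\cal{P}_{s,t}(0,\alpha][1]$, in which case $ch(E)=-v$ and every sign in your dictionary flips (the remark after Lemma \ref{lemma4.1}); your argument silently assumes $E\in\cal{A}_s$. Section 7 of \cite{ABCH} does not supply the containment you appeal to.

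The paper's proof is precisely the machinery needed to bridge the two lattices of subobjects. For ``$(s,t)$-semistable $\Rightarrow$ quiver semistable'' it first places $E$ in $\cal{P}_{s,t}(\alpha,1]$ or in $\cal{P}_{s,t}(0,\alpha][1]$, uses the vanishing $\mathrm{Hom}(\cal{P}_{s,t}(0,\alpha][1],\cal{P}_{s,t}(\alpha,1])=0$ to show a quiver-destabilizing subobject $F$ lands in the same piece, and then derives the contradiction not from $F$ itself but from its first Harder--Narasimhan factor $F'$, whose composite $F'\to F\to E$ must vanish by $(s,t)$-semistability. For the converse it first uses quiver semistability against the canonical decomposition $P[1]\to E\to Q$ to force $P=0$ or $Q=0$, and then tests $(s,t)$-semistability only against the first HN factor $E'$ of $E$, which together with its cofiber lies in $\cal{P}_{s,t}(\alpha,1]\subset\cal{A}(k)$, so that the triangle $E'\to E\to H$ is exact in \emph{both} hearts. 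As written, both implications in your argument are unjustified; you would need to supply essentially all of these steps.
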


\begin{proof}Without loss of generality, let us just prove the equivalence of semistable objects under both stability conditions. Suppose $E$ is $(s,t)$-semistable but quiver unstable. Since the quiver heart $\cal{A}(k)$ is coming from tilting $\cal{A}_s=\cal{P}_{s,t}(0,1]$ with respect to the torsion pair $(\cal{P}_{s,t}(\alpha,1],\cal{P}_{s,t}(0,\alpha])$ for some $\alpha\in(0,1]$, $E\in\cal{P}_{s,t}(\alpha,1]$ or $\cal{P}_{s,t}(0,\alpha][1]$. 

First let us assume $E\in\cal{P}_{s,t}(\alpha,1]$. 
Let $F$ be a quiver destablizing subobject of $E$ in $\cal{A}(k)$ with dimension vector $\vec{b}$. Then $\vec{a}\cdot\vec{b}<0$, or equivalently, $\chi(w_{s.t},ch(F))<0$. Since $Hom(\cal{P}_{s,t}(0,\alpha][1],\cal{P}_{s,t}(\alpha,1])=0$, $F\in\cal{P}_{s,t}(\alpha,1]$ as well. Let $F'\in\cal{P}_{s,t}(\alpha,1]\subset\cal{A}(k)$ be the first $(s,t)$-semistable factor of $F$, then by lemma \ref{lemma4.2}, $\mu_{s,t}(F')\ge\mu_{s,t}(F)>\mu_{s,t}(E)$, since $E$ is $(s,t)$-semistable, the composition
$$F'\rightarrow F\rightarrow E$$
is zero. This contradicts with the fact that $F$ is a subobject of $E$ in $\cal{A}(k)$. 

If $E\in\cal{P}_{s,t}(0,\alpha][1]$, we could consider a quiver destabilizing quotient $H$ of $E$ in $\cal{A}(k)$. A similar argument as above gives $H\in\cal{P}_{s,t}(0,\alpha][1]$ as well. The rest of the argument can be done similarly to the previous case by considering the last $(s,t)$-semistable factor $H'$ of $H$.

Conversely, suppose $E$ is quiver semistable. Notice that the $(s,t)$-phase $\phi_{s,t}(E)$ of $E$ is a well defined number in $(\alpha,\alpha+1]$ (although we do not know if $E$ is $(s,t)$-semistable, we can still talk about the phase of $Z_{s,t}(E)$). Write $E$ uniquely as an extension
$$P[1]\rightarrow E\rightarrow Q$$
where $P\in\cal{P}_{s,t}(0,\alpha]$, and $Q\in\cal{P}_{s,t}(\alpha,1])$. I claim that if $\alpha<\phi_{s,t}(E)\le1$, then $P=0$ whereras if $1<\phi_{s,t}(E)\le1+\alpha$, $Q=0$. For the first case, let $\vec{b}$ be the dimension vector of $P[1]$. Since $E$ is quiver semistable, $\vec{a}\cdot\vec{b}\ge0$, and therefore by lemma \ref{lemma4.1}, $\chi(w_{s,t},ch(P[1]))\ge0$ or equivalently $\chi(w_{s,t},ch(P))\le0$, by lemma \ref{lemma4.2}, this implies that $\phi_{s,t}(P)\ge\phi_{s,t}(E)$, a contradiction unless $P=0$. The second case can be treated similarly. Notice that in the second  case $ch(E)=-v$. If $Q\ne0$, let $\vec{c}$ be the dimension vector of $Q$. Then $\vec{a}\cdot\vec{c}\le0$, or equivalently, $\chi(w_{s,t},ch(Q))\ge0$ because in this case there is an extra negative sign in the formula in lemma \ref{lemma4.1}. This implies that $\phi_{s,t}(Q)\le\phi_{s,t}(E)-1$, contradiction.

  So again either $E\in\cal{P}_{s,t}(\alpha,1]$ or $E\in\cal{P}_{s,t}(0,\alpha][1]$. Suppose $E\in\cal{P}_{s,t}(\alpha,1]$, the other case can be treated similarly. Let $E'\in\cal{P}_{s,t}(\alpha,1]$ be the first $(s,t)$-semistable factor of $E$ with dimension vector $\vec{b}'$. Form the exact triangle
  \begin{eqnarray}\label{trig}E'\rightarrow E\rightarrow H
  \end{eqnarray}
  then $H\in\cal{P}_{s,t}(\alpha,1]$. Thus (\ref{trig}) is an exact sequence in $\cal{A}(k)$. Since $E$ is quiver semistable, $\vec{a}\cdot\vec{b}'\ge0$, again by lemma \ref{lemma4.2}, $\mu_{s,t}(E')\le\mu_{s,t}(E)$, so $E'=E$ is semistable.

\end{proof}

\section{The Bridgeland moduli as birational models of $M_H(v)$}

Notation as last section. The determinant line bundle $\lambda_{s,t}$ is always ample on the Bridgeland moduli space $M_{s,t}(v)$. Let $U\subset M_H(v)$ be an open subset of codimention at least $2$ and $\cal{E}$ be a flat family of sheaves on $X\times U$ which is both $(s,t)$-stable and Gieseker stable, then by construction of determinant line bundles,
$$\lambda_{s,t}\cong\lambda_{\cal{E}}(w_{s,t})$$
as line bundles on $U$.

This means the ample determinant line bundle $\lambda_{s,t}$ ton $M_{s,t}(v)$ pulls back to $\lambda(w_{s,t})$ on $M_H(v)$. Denote $M_{s,t}^P$ the normalization of the main component of $M_{s,t}(v)$ (with reduced induced scheme structure) whose generic point corresponds to a sheaf and still denote $\lambda_{s,t}$ as the restriction to it of the ample determinant line bundle. Then we can interpret 
$M_{s,t}^P(v)$
as birational models of $M_H(v)$.

More precisely, consider the Bridgeland wall and chamber structure with respect to the main component in the $(s,t)$-plane for $t>0$, $s<\frac{c_1\cdot H}{r}$. Since the walls are nested semi circles, we could choose an $s$ such that the ray $\{(s,t)|t>0\}$ intersects all {\bf{actual}} walls. Then we can decrease the parameter $t$, then $\lambda(w_{s,t})$ moves in $N^1(M_H(v))_{\bb{R}}$. An easy computation shows that $\lambda(w_{s,t})$ is moving toward the side of nef cone opposite to $-K_{M_H(v)}$. This corresponds to running a directed MMP on $M_H(v)$ and we get $M_{s,t}^P(v)$ as the birational models of $M_H(v)$.

When $t>>0$, it is proved in \cite{ABCH} that a sheaf $F$ is Gieseker stable if and only if it is $(s,t)$-stable. Thus 
$$M_{s,t}^P(v)\cong M_H(v)$$
for $t>>0$ and $\lambda_{s,t}\cong\lambda(w_{s,t})$ is ample on $M_H(v)$.

The first time $(s,t)$ hits an actual wall at $(s,t_0)$,  by the definition of wall, every $(s,t_0^+)$ semistable objects is still $(s,t_0)$ semistable, but some $(s,t_0^+)$-stable objects become strictly $(s,t_0)$-semistable. By the universal property of coarse moduli space, taking $S$-equivalence classes of $(s,t_0)$-semistable objects corresponds to a contracting morphism. It follows from lemma \ref{lemma5.1} that the exceptional loci is positive dimensional. (This is different from the case of sheaves on a K3 surface where there exists fake walls, i.e. the stable objects changed but the moduli space itself does not, see \cite{BM}.)  
$$\pi_0^+:\ M_H(v)\cong M_{s,t_0^+}(v)\longrightarrow M^P_{s,t_0}(v).$$
and $\lambda_{s,t_0}$ pulls back to a nef but not ample line bundle $\lambda(w_{s,t_0})$ on $M_H(v)\cong M_{s,t_0^+}(v)$. The first actual wall corresponds to one end of the nef cone of $ M_{s,t_0^+}$ (the other end being generated by $\lambda(u_1)$).

There are several possibilities:
\begin{enumerate}
\item $\pi_0^+$ is a fiber contraction. The directed MMP stops. In this case we actually have a ${\bf collapsing \ wall}$, which means there are no semistable objects in this component whatsoever after crossing the wall.
$$M^P_{s,t_0^-}=\varnothing.$$ 
\item $\pi_0^+$ is a divisorial contraction contracting $\Delta$. 
Crossing the wall will not affect $M_H(v)\setminus \Delta$. If $A$ is the destabilizing sheaf of $E$ at $(s,t_0)$, i.e $\mu_{s,t_0^+}(A)<\mu_{s,t_0^+}(E)$ but $\mu_{s,t_0^-}(A)>\mu_{s,t_0^-}(E)$, and $E$ sits in an exact sequence in $\cal{A}_s$ 
\begin{eqnarray}\label{des}
A\longrightarrow E\longrightarrow B
\end{eqnarray}

Notice that $A$, $B$ has to be at least $(s,t_0)$-semistable, otherwise the wall would have been crossed earlier. If both $A$ and $B$ are stable, then crossing the wall amounts to replace the $(s,t_0^+)$-stable objects $E$
by $(s,t_0^-)$-stable objects $E'$ which sits in the 'reverse' extension
\begin{eqnarray}
B\longrightarrow E'\longrightarrow A
\end{eqnarray}

If  $A$ or $B$ is strictly $(s,t_0)$-semistable, we just have to iterate the above process for the stable factors of $A$ or $B$. In any case, by lemma \ref{lemma5.1},
$$\pi_0^-:\ M^P_{s,t_0^-}\rightarrow M^P_{s,t_0}$$ 
has to be a small contraction. Because $M^P_{s,t_0}$ is $\bb{Q}$-factorial, we must have
$$M^P_{s,t_0^-}\cong M^P_{s,t_0}.$$ (Unlike the case of sheaves on K3 surfaces, there is no bouncing wall \cite{BM}, i.e  $M^P_{s,t_0^-}\cong M^P_{s,t_0^+}$ can not happen.) Since $M^P_{s,t_0^-}$ is of Picard number $1$,  the next wall has to be a collapsing wall.
\item $\pi_0^+$ is a small contraction. Again by lemma \ref{lemma5.1}, $\pi_0^-$ is a small contraction as well. The ample line bundle $\lambda_{s,t_0^-}$ on $M^P_{s,t_0^-}$ pulls back to $\lambda(w_{s,t_0^-})$ on $M_H(v)$. Therefore
$$M^P_{s,t_0^-}\cong Proj R(M^P_{s,t_0^-},\lambda_{s,t_0^-}))\cong Proj R(M_H(v),\lambda(w_{s,t_0^-}))$$
we get a flip
$$
\begin{diagram}
\node{M^P_{s,t_0^+}}\arrow{se,b}{\pi_0^+}\arrow[2]{e,--}\node{}\node{M^P_{s,t_0^-}}\arrow{sw,b}{\pi_0^-}\\
\node{}\node{M^P_{s,t_0}}\node{}
\end{diagram}
$$
 After crossing the wall, we can keep running the directed MMP by decreasing $t$ further and repeating the above process. As long as a generic point $E$ in the exceptional loci of $\pi_0^+$ is a sheaf, the destabilizing object $A$ is a sheaf as well. This follows from the long exact sequence in cohomology of (\ref{des}): 
 $$
 \begin{diagram}\dgARROWLENGTH=1.5em
 \node{H^{-1}(A)}\arrow{e,J}\node{H^{-1}(E)}\arrow{e}\node{H^{-1}(B)}\arrow{e}\node{H^0(A)}\arrow{e}\node{H^0(E)}\arrow{e,A}\node{H^0(B)}
 \end{diagram}
 $$
 The assumption in lemma \ref{lemma5.1} is still satisfied. Since there always exists a collapsing wall, as $t$ get small enough, the directed MMP will either ended up with case a) or b).

\end{enumerate}

\begin{lemma}\label{lemma5.1} Let $A$, $B$ be $(s,t_0)$-stable objects in $\cal{A}_s$ and $A\in\cal{Q}_s$ be a sheaf. Suppose that $\mu_{s.t_0}(A)=\mu_{s,t_0}(B)$ and $\mu_{s,t_0^+}(A)<\mu_{s,t_0^+}(B)$ (therefore $\mu_{s,t_0^-}(A)>\mu_{s,t_0^-}(B)$). Then $\dim_{\bb{C}} Hom_{\cal{D}}^1(B,A)>\dim_{\bb{C}} Hom_{\cal{D}}^1(A,B)$.
\end{lemma}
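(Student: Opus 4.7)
The plan is to compute $\dim Ext^1(B,A) - \dim Ext^1(A,B)$ via the Euler pairing, using Serre duality and the sheaf hypothesis on $A$ to kill an obstructing $Ext^2$ term. As a first input, I would show $Hom(A,B) = Hom(B,A) = 0$: both objects are $(s,t_0)$-stable of the same Bridgeland slope, so any nonzero morphism in either direction must be an isomorphism of stable objects; but if $A\cong B$ their Chern characters would coincide and hence their slopes would agree at every $(s,t)$, contradicting $\mu_{s,t_0^+}(A)<\mu_{s,t_0^+}(B)$.

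The second input is the Euler characteristic difference. Writing $c_1(A)=c_AH$ and $c_1(B)=c_BH$, a direct application of Hirzebruch--Riemann--Roch on $\bb{P}^2$ with $td(\bb{P}^2)=1+\frac{3}{2}H+H^2$ yields
\begin{equation*}
\chi(A,B)-\chi(B,A)=3(r_Ac_B-r_Bc_A).
\end{equation*}
To pin down the sign, I would rearrange the numerator of $\mu_{s,t}(A)-\mu_{s,t}(B)$ at fixed $s$ into the form $-\frac{1}{2}(r_Ac_B-r_Bc_A)(t^2-t_0^2)$, sitting over the positive denominator $t(c_A-sr_A)(c_B-sr_B)$ (positive for objects genuinely in $\cal{A}_s$); the assumption $\mu_{s,t_0^+}(A)<\mu_{s,t_0^+}(B)$ then forces $r_Ac_B-r_Bc_A>0$, so $\chi(A,B)-\chi(B,A)>0$.

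The crux of the argument is the vanishing $Ext^2(A,B)=0$. By Serre duality on $\bb{P}^2$,
\begin{equation*}
Ext^2(A,B)\cong Hom(B,\,A\otimes K_{\bb{P}^2})^{\vee}=Hom(B,A(-3))^{\vee}.
\end{equation*}
The hypothesis that $A\in\cal{Q}_s$ is a sheaf enters crucially: $A(-3)$ is then a sheaf, and under the Mumford-slope bound $\mu^M(A)\leq s+3$ (which holds in the wall-crossing context of Section 5 for the destabilizing sheaves under consideration) it lies in $\cal{F}_s$, so $A(-3)[1]\in\cal{A}_s$ and hence $A(-3)\in\cal{A}_s[-1]$. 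Because $B\in\cal{A}_s$ and negative Ext groups vanish within a heart, $Hom_{\cal{D}}(B,A(-3))=0$. The edge case $\mu^M(A)>s+3$ would require a finer Harder--Narasimhan analysis of $A(-3)$ with respect to $\sigma_{s,t_0}$ exploiting the stability of $B$, and is where the argument becomes most delicate.

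Finally, combining via $\chi(E,F)=\dim Hom(E,F)-\dim Ext^1(E,F)+\dim Ext^2(E,F)$ applied to both orderings and subtracting yields
\begin{equation*}
\dim Ext^1(B,A)-\dim Ext^1(A,B)=\bigl(\chi(A,B)-\chi(B,A)\bigr)+\dim Ext^2(B,A)\geq 3(r_Ac_B-r_Bc_A)>0,
\end{equation*}
which is the claim of the lemma.
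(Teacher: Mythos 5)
Your overall strategy---antisymmetrize the Euler pairing, kill the degree-zero $\mathrm{Hom}$'s by stability, and reduce everything to the sign of $r_Ac_B-r_Bc_A$---is in substance the same computation the paper performs, and your positivity step is correct and arguably cleaner than the paper's: writing the numerator of $\mu_{s,t}(A)-\mu_{s,t}(B)$ as $-\tfrac{1}{2}(r_Ac_B-r_Bc_A)(t^2-t_0^2)$ over a positive denominator handles all ranks uniformly, where the paper splits into cases according to $ch_0(A)$. The identity $\chi(A,B)-\chi(B,A)=3(r_Ac_B-r_Bc_A)$ is also exactly the quantity the paper extracts as $\chi(A^{\lor}\otimes^L B\otimes^L\cal{O}_C)$ after restricting to a cubic. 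The divergence, and the genuine gap, is in how the unwanted $\mathrm{Ext}^2(A,B)$ is disposed of. Your mechanism---$A(-3)\in\cal{F}_s$, hence $A(-3)[1]\in\cal{A}_s$, hence $\mathrm{Hom}(B,A(-3))$ is a negative-degree Hom between heart objects---requires $A$ to be torsion-free with every Harder--Narasimhan slope in $(s,s+3]$. Neither condition is among the lemma's hypotheses: if $A$ is a torsion sheaf then $A(-3)$ is again torsion and lies in $\cal{Q}_s$, never in $\cal{F}_s$, so your argument breaks down precisely in the rank-zero situations to which Section 6 of the paper is devoted; and the asserted bound $\mu^M(A)\le s+3$ "in the wall-crossing context" is not proved and is not part of the statement. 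You flag this case yourself as "delicate," but it is the entire content of the vanishing. The paper avoids any slope bound by tensoring $0\to\cal{O}_{\bb{P}^2}(-3)\to\cal{O}_{\bb{P}^2}\to\cal{O}_C\to0$ (for a general cubic $C$) with $B$ and applying $\mathrm{RHom}(A,-)$: the only vanishings needed beyond $\mathrm{Hom}(A,B)=\mathrm{Hom}(B,A)=0$ are $\mathrm{Hom}^i(A,B\otimes^L\cal{O}_C)=0$ for $i\ge2$ (and $i\le-2$), which follow from $A$ being a sheaf (degree reasons against the shifted summands of the quiver heart) together with the genericity of $C$ (a nonzero map $\cal{O}_C(k)\to A(-3)$ would have torsion image inside a fixed torsion sheaf, impossible for general $C$). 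To close your version you would need an argument of comparable strength, not a slope hypothesis.

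A secondary, fixable omission: your formula $\chi(B,A)=\mathrm{hom}(B,A)-\mathrm{ext}^1(B,A)+\mathrm{ext}^2(B,A)$ silently assumes $\mathrm{Ext}^3(B,A)=0$. Since $B$ may have $H^{-1}(B)\ne0$ while $A$ is concentrated in degree $0$, one has $\mathrm{Hom}^3(B,A)\cong\mathrm{Ext}^2(H^{-1}(B),A)\cong\mathrm{Hom}(A,H^{-1}(B)(-3))^{\lor}$, which is not zero for free; it does vanish because a sheaf map from $A\in\cal{Q}_s$ to $H^{-1}(B)(-3)\in\cal{F}_{s-3}\subset\cal{F}_s$ has image in $\cal{Q}_s\cap\cal{F}_s=0$, but this needs to be said (it is another place where the hypothesis that $A$ is a sheaf in $\cal{Q}_s$ is doing real work).
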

\begin{proof} By Serre duality,
$$Hom_{\cal{D}}^1(A,B)\cong Hom_{\cal{D}}^1(B,A\otimes^L\cal{O}(-3))^{\lor}.$$
Consider the exact sequence of sheaves on $\bb{P}^2$
$$0\rightarrow\cal{O}_{\bb{P}^2}(-3)\rightarrow\cal{O}_{\bb{P}^2}\rightarrow\cal{O}_C\rightarrow0$$
where $C$ is a general smooth cubic curve. Tensoring the above sequence with $B$ we get an exact triangle
$$B\otimes^L\cal{O}_{\bb{P}^2}(-3)\rightarrow B\rightarrow B\otimes^L\cal{O}_C$$
Applying the derived functor $\mathrm{RHom}^{\bullet}(A,-)$ and take long exact sequence in cohomology we obtain
$$
\begin{diagram}\dgARROWLENGTH=1.5em
\node{\cdots}\arrow{e}\node{Hom(A,B\otimes^L\cal{O}_{\bb{P}^2}(-3))}\arrow{e}\node{Hom(A,B)}\arrow{e}\node{Hom(A,B\otimes^L\cal{O}_C)}\node{}\\
\node{}\arrow{e}\node{Hom^1(A,B\otimes^L\cal{O}_{\bb{P}^2}(-3))}\arrow{e}\node{Hom^1(A,B)}\arrow{e}\node{Hom^1(A,B\otimes^L\cal{O}_C)}\node{}\\
\node{}\arrow{e}\node{Hom^2(A,B\otimes^L\cal{O}_{\bb{P}^2}(-3))}\arrow{e}\node{Hom^2(A,B)}\arrow{e}\node{Hom^2(A,B\otimes^L\cal{O}_C)}\arrow{e}\node{\cdots\hspace{2cm}}
\end{diagram}
$$

Since both $A$, $B$ are $(s,t_0)$-stable,
$$\mathrm{Hom}_{\cal{D}}(A,B)=0$$
and 
$$\mathrm{Hom}^2_{\cal{D}}(A,B\otimes^L\cal{O}_{\bb{P}^2}(-3))^{\lor}\cong\mathrm{Hom}_{\cal{D}}(B,A)=0.$$

Moreover I claim 
$$\mathrm{Hom}^i_{\cal{D}}(A,B\otimes^L\cal{O}_C)=0$$
for $i\le-2$ and $i\ge2$.

Assuming the claim, we get
\begin{eqnarray}\label{positive}
hom^1(B,A)-hom^1(A,B)&=&hom^0(A,B\otimes^L\cal{O}_C)-hom^1(A,B\otimes^L\cal{O}_C)\nonumber\\
&\ge&\chi(A^{\lor}\otimes^LB\otimes^L\cal{O}_C)\nonumber\\
&=&\int_{\bb{P}^2}ch(A^{\lor})\cdot ch(B)\cdot ch(\cal{O}_C)\cdot td(\bb{P}^2)\nonumber\\
&=&ch_0(A)ch_1(B)-ch_0(B)ch_1(A).
\end{eqnarray}

We prove that (\ref{positive}) has to be strictly positive. Notice that we have $\mu_{s,t}(A)<\mu_{s,t}(B)$ for $t>>0$, where
$$\mu_{s,t}(ch_0,ch_1,ch_2)=\frac{\frac{-t^2}{2}ch_0+(ch_2-sch_1+\frac{s^2}{2}ch_0)}{t(ch_1-sch_0)}$$
and for either $A$ or $B$, the denominator of $\mu_{s,t}$ is strictly positive. 
According to the rank of $A$, there are several cases:
\begin{enumerate}
\item $ch_0(A)=0$. Then $ch_0(B)\le0$ and $ch_1(A)>0$. If $ch_0(B)=0$ then the wall can never be crossed as we decrease $t$. If $ch_0(B)<0$ we immediately get (\ref{positive}) is positive.
\item $ch_0(A)>0$. Then we have an inequality for the dominant terms
$$\frac{ch_0(A)}{ch_1(A)-sch_0(A)}\ge\frac{ch_0(B)}{ch_1(B)-sch_0(B)}.$$
The equality can not be achieved because otherwise the wall can never be crossed again. But this precisely means $$ch_0(A)ch_1(B)-ch_0(B)ch_1(A)>0.$$
\end{enumerate}

It remains to prove the vanishing statement in the claim. Since $A$, $B$ are $(s,t_0)$-stable, we can assume
$A$, $B$ both are in the quiver heart 
$$\cal{A}(k)=\langle\cal{O}_{\bb{P}^2}(k-2)[2],\cal{O}_{\bb{P}^2}(k-1)[1],\cal{O}_{\bb{P}^2}(k)\rangle$$ for suitable $k$.
When $i\le-2$ or $i\ge3$, for degree reasons,
$$\mathrm{Hom}^i(\cal{O}_{\bb{P}^2}(k-j)[j],\cal{O}_C(k-j')[j'])=0$$
for any $j,j'=0,1,2$. This gives the vanishing statement for $i\le-2$ and $i\ge3$ case.

When $i=2$, $$\mathrm{Hom}^2(A,\cal{O}_C(k-j)[j])=0$$
is clear for $j=1,2$ because $A$ is a sheaf. We also have
$$\mathrm{Hom}^2_{\cal{D}}(A,\cal{O}_C(k))^{\lor}=\mathrm{Hom}_{\cal{O}_{\bb{P}^2}}(\cal{O}_C(k),A(-3))=0.$$
Because $\cal{O}_C(k)$ is a torsion sheaf, its image has to be torsion, but if $C$ is general there is no nontrivial map form $\cal{O}_C(k)$ to any fixed torsion sheaf.

\end{proof}

\section{Some examples in rank zero} 
We want to describe specifically the flips for the topological types $(0,4,-4)$ and $(0,5,-\frac{15}{2})$ by using the stratification of the Gieseker moduli given by Drezet and Maican \cite{DM} and by Maican \cite{M5} and give similar stratifications for the Bridgeland moduli spaces.\\
\\
Before starting to describe the birational models it is convenient to find an estimative that help us to bound the number of actual walls. Following same idea as in \cite{ABCH} for the case of the Hilbert scheme, if $\mathcal{A}$ is a sheaf of topological type $(0,c,d)$ with $c>0$ and $F$ is a destabilizing object (which is necessarily a sheaf) then $\mathcal{A}$ and $F$ fit into an exact sequence
$$
0\rightarrow K\rightarrow F\rightarrow \mathcal{A}
$$   
and by Corollary 6.2 in \cite{ABCH} we must have $K\in \mathcal{F}_s$ and $F\in\mathcal{Q}_s$ for all $s$ along the wall. If $ch(F)=(r',c',d')$ then in our case where the wall is a semicircle with center $(d/c,0)$ and radius $R$ this says
$$
\frac{d(K)}{r(K)}\leq \frac{d}{c}-R\ \ \ \text{ and }\ \ \ \ \frac{c'}{r'}\geq \frac{d}{c}+R. 
$$
Since $r(K)=r'$ and $d(K)-c'+c\geq 0$ then combining the inequalities above we get
\begin{equation}\label{bound1}
R\leq \frac{c'}{r'}-\frac{d}{c}\leq \frac{c}{r'}-R
\end{equation}
which immediately produces
\begin{equation}\label{boundradius}
R\leq \frac{c}{2r'}.
\end{equation}
\subsection{The birational geometry of $M_H^4=M_H(0,4,-4)$}
In \cite{DM} Drezet and Maican describe a stratification for the Gieseker moduli:
\begin{center}
\begin{tabular}{|c|c|c|}
\hline
Stratum & Exact sequence in $\mathcal{A}_{-1}$ & Codimension\\
\hline
& & 
\\
$X_0$ & $0\rightarrow \O(1)\rightarrow \mathcal{F}\rightarrow \O(-3)[1]\rightarrow 0$& $3$\\
& & 
\\
\hline
& & 
\\
$X_1$ & $0\rightarrow [\O(-1)\rightarrow 2\O]\rightarrow \mathcal{F}\rightarrow [2\O(-2)\rightarrow \O(-1)]\rightarrow 0$& $1$\\
& & 
\\
\hline
& & 
\\
$X_2$ & $0\rightarrow 2\O\rightarrow \mathcal{F}\rightarrow 2\O(-2)[1]\rightarrow 0$& $0$\\
& & 
\\
\hline
\end{tabular}
\end{center}
Since in the rank-zero case all the walls are semicircles with center $(d/c,0)$ then the obvious choice for $s$, in order to run the directed minimal model as explained before, is $s=d/c=-1$.\\
\\
The stratification above gives us all the Bridgeland walls:
\begin{center}
\begin{tabular}{|c|c|c|}
\hline
Wall $W_i$& Destabilizing Object & $R_i$\\
\hline
& & 
\\
$W_0$ & $ \O(1)$ & $2$\\
& &
\\
\hline
& & 
\\
$W_1$ & $[\O(-1)\rightarrow 2\O]$ & $\sqrt{2}$\\
& & 
\\
\hline
& & 
\\
$W_2$ & $2\O$ & $1$\\
& & 
\\
\hline
\end{tabular}
\end{center} 
$W_2$ is the collapsing wall and so any potential wall with radius $<1$ can not be an actual wall. The radius of a wall is given by
$$
R=\sqrt{1+2\frac{c'+d'}{r'}}
$$ 
where $(r',c',d')$ are the invariants of the destabilizing object producing the wall. From equation (\ref{boundradius}) we get
$$
R\leq \frac{2}{r'}
$$
and so a destabilizing element producing a wall other than the collapsing wall must have rank one. In this case, combining the formula for the radius and equation (\ref{boundradius}) we get that the invariants of a destabilizing object producing a wall must satisfy $2\chi'-c'=2,3,4,$ or $5$ producing walls with $R=1,\sqrt{2},\sqrt{3},2$ respectively. But the wall with $R=\sqrt{3}$ does not occur since in such case from equation (\ref{bound1}) we would have
$$
\sqrt{3}-1\leq c'\leq 3-\sqrt{3}
$$
and so $c'=1$ which is impossible since $2\chi'-c'=4$. Thus the only Bridgeland walls are the ones in the table above.\\
\\
Denote by $M_0,M_1$ and $M_2$ the Bridgeland moduli spaces at the walls $W_0,W_1$ and $W_2$ respectively and by $M_i^+$ and $M_i^-$ the moduli spaces for $t=R_i\pm \epsilon$ for some small enough $\epsilon>0$. Let $\pi_i:M_i^+\dashrightarrow M_i^-$ be the corresponding maps and $E_i^+,E_i^-$ the exceptionals for $\pi_i$ and $\pi_i^{-1}$. We have $\pi_0$ is a flip, $\pi_1$ is a divisorial contraction and $\pi_2$ is the contraction to a point. \\
\\
Crossing $W_0$ will produce complexes $F^{\bullet}$ that fit into an exact sequence
$$
0\rightarrow \O(-3)[1]\rightarrow F^{\bullet}\rightarrow \O(1)\rightarrow 0
$$  
such extensions are parametrized by $\Ext(1,\O(1),\O(-3)[1])$ and so $E_0^-$ is isomorphic to $\P^2$. To see how $E_0^{-}$ intersects $E_1^{+}$ notice that for every $p \in \P^2$ there is a unique nontrivial class of extensions of the form
$$
0\rightarrow \O(-3)[1]\rightarrow \mathcal{G}\rightarrow \C_p\rightarrow 0.
$$ 
Pulling back such extension will produce a diagram of the form
$$
\begin{diagram}\dgARROWLENGTH=1.5em
\node{}\node{}\node{}\node{I_p(1)}\arrow{s}\arrow{sw,--}\node{}\\
\node{0}\arrow{e}\node{\O(-3)[1]}\arrow{e}\arrow{s,=}\node{F^{\bullet}}\arrow{e}\arrow{s}\node{\O(1)}\arrow{e}\arrow{s}\node{0}\\
\node{0}\arrow{e}\node{\O(-3)[1]}\arrow{e}\node{\mathcal{G}}\arrow{e}\node{\C_p}\arrow{e}\node{0}
\end{diagram}
$$
and so $F^{\bullet}\in E_0^-\cap E_1^+$. Since the elements we produce in this way are parametrized by $\P^2$ and $E_0^-\cong\P^2$ then we conclude that $E_0^-\subset E_1^+$.\\
\\
Drezet and Maican have also proved that if $N(6,2,2)$ denotes the set of semi-stable morphisms $2\O(-2)\rightarrow 2\O$ then $X=X_1\cup X_2$ is an open set of the blow up $\widetilde{N}$ of $N(6,2,2)$ along $\P^2\times \P^2$. Moreover, the complement $\widetilde{N}\setminus X$ is isomorphic to $\P^2$ and indeed it coincides with $E_0^-$. Now, crossing $W_1$ will produce complexes that are extensions of the form
$$
0\rightarrow I_q^{\vee}(-3)[1]\rightarrow F^{\bullet}\rightarrow I_p(1)\rightarrow 0
$$
but since $\ext(1,I_p(1),I_q^{\vee}(-3)[1])=1$ then $E_1^{-}$ is isomorphic to $\P^2\times \P^2$. This proves that $M_1^{+}\cong \widetilde{N}$, $M_1^{-}\cong N(6,2,2)$ and the divisorial contraction $\pi_1:M_1^+\dashrightarrow M_1^{-}$ can be extended to a morphism and it is in fact the blow up of $N(6,2,2)$ along $\P^2\times\P^2$. 
\subsection{The birational geometry of $M_{H}^5=M_{H}(0,5,-\frac{15}{2})$}
The Maican stratification of the Gieseker moduli is:
\begin{center}
\begin{tabular}{|c|c|c|}
\hline
Stratum & Exact sequence in $\mathcal{A}_{-3/2}$ & Codimension\\
\hline
& & 
\\
$X_0$ & $0\rightarrow \O(1)\rightarrow \mathcal{F}\rightarrow \O(-4)[1]\rightarrow 0$& $6$\\
& & 
\\
\hline
& & 
\\
$X_1$ & $0\rightarrow [\O(-1)\rightarrow 2\O]\rightarrow \mathcal{F}\rightarrow [2\O(-3)\rightarrow \O(-2)]\rightarrow 0$& $4$\\
& & 
\\
\hline
& & 
\\
$X_2$ & $0\rightarrow [2\O(-2)\rightarrow 2\O(-1)\oplus \O]\rightarrow \mathcal{F}\rightarrow \O(-3)[1]\rightarrow 0$& $1$\\
& & 
\\
\hline
& & 
\\
$X_3$ & $0\rightarrow 5\O(-1)\rightarrow \mathcal{F}\rightarrow 5\O(-2)[1]\rightarrow 0$ & 0\\
& & 
\\
\hline
\end{tabular}
\end{center}
The stratum $X_3$ is an open set, $X_0$ is closed and  $X_1,X_2$ are locally closed subsets. The idea is that each strata produce a Bridgeland wall and that those are the only walls that change the main component of the Bridgeland moduli. \\
\\
As before, the walls are semicircles with center $(-\frac{3}{2},0)$. The radius of a Bridgeland wall corresponding to a destabilizing object with invariants $(r',c',d')$ is:
$$
R=\sqrt{\frac{9}{4}+\frac{2d'+3c'}{r'}}.
$$
Again, the natural choice for the ray that will give us the directed minimal model is $(-\frac{3}{2},t)$. It is easy to check that the sub objects in the Maican stratification are sub objects in the $\mathcal{A}_{-3/2}$ category and so they produce the following walls:
\begin{center}
\begin{tabular}{|c|c|c|}
\hline
Wall & Destabilizing Object & $R$\\
\hline
& & 
\\
$W_0$ & $ \O(1)$ & $\frac{5}{2}$\\
& &
\\
\hline
& & 
\\
$W_1$ & $[\O(-1)\rightarrow 2\O]$ & $\frac{\sqrt{17}}{2}$\\
& & 
\\
\hline
& & 
\\
$W_2$ & $ [2\O(-2)\rightarrow 2\O(-1)\oplus \O]$ & $\frac{3}{2}$\\
& & 
\\
\hline
& & 
\\
$W_3$ & $5\O(-1)$ & $\frac{1}{2}$\\
& & 
\\
\hline
\end{tabular}
\end{center} 
Then it is clear that $W_3$ is the collapsing wall since all the objects in the open subset $X_3$ get replaced by complexes that are extensions of the form
$$
0\rightarrow 5\O(-2)[1]\rightarrow F^{\bullet}\rightarrow 5\O(-1)\rightarrow 0
$$
and all these extensions are trivial.\\
\\
To see that these are the only interesting walls (walls that produce all the birational models of $M^5_{H}$) we use equation (\ref{boundradius}) to conclude that if $W$ is a rank $r'$ wall then
$$
R\leq \frac{5}{2r'} 
$$
and therefore there are no walls  with $r'>5$ since such wall would be contained inside the collapsing wall.  In our case
$$
R=\sqrt{\frac{9}{4}+2\frac{\chi'}{r'}-2}\leq \frac{5}{2r'}
$$
where $\chi'$ denotes the Euler characteristic of a destabilizing sheaf of invariants $(r',c',d')$. Thus if $r'=2$ we get $\chi'=1$, if $r'=3$ we get $\chi'=0$ or 1, if $r'=4$ or $5$ then $\chi'=0$. But if $\chi'=0$ then $R=1/2$ and we get the collapsing wall. For $\chi'=1$ we have
$$
R=\begin{cases}\frac{\sqrt{5}}{2}& \text{if   }r'=2\\
\sqrt{\frac{11}{12}} &\text{if   }r'=3\end{cases}
$$ 
Notice that a rank 3 wall would occur after the divisorial contraction and so it must be the collapsing wall instead of $W_3$ or it destabilizes a component of the Bridgeland moduli other than the main component. Rank 2 walls do not occur either since by equation (\ref{bound1}) we would have
$$
 -3+\sqrt{5}\leq c'\leq 2-\sqrt{5}
$$
which is impossible being $c'$ an integer.\\
\\
Further computations easily show that the only rank one walls are $W_0,W_1,W_2$ and $W_3$ that can be seen also as a rank one wall for the destabilizing object $\O(-1)$.\\
\\ 
We have two flips corresponding to crossing the walls $W_0$ and $W_1$ and a divisorial contraction corresponding to crossing $W_2$.  Let $M_0,M_1,M_2$ and $M_3$ be the Bridgeland moduli spaces at the walls and denote by $M_i^{+}$ and $M_i^{-}$ the Bridgeland moduli spaces for nearby $t$ as before. Let $\pi_i^{\pm}:M_i^{\pm}\rightarrow M_i$ denote the corresponding contractions and $E_i^{+}, E_i^{-}$ the exceptional loci for the birational maps $\pi_i:=(\pi_i^-\circ\pi_i^+):M_i^{+}\dashrightarrow M_i^{-}$ and $\pi_i^{-1}:M_i^{-}\dashrightarrow M_i^{+}$. We have:
\begin{itemize}
\item $E_0^{+}$ and $E_0^{-}$ are projective spaces. Indeed, $E_0^{+}$ is isomorphic to the moduli of plane quintics which can be identified with $\P^{20}$. Crossing the wall will produce extensions of the form
$$
0\rightarrow \O(-4)[1]\rightarrow F^{\bullet} \rightarrow \O(1)\rightarrow 0
$$
which are parametrized by $\P(\Ext(1,\O(1),\O(-4)[1]))\cong \P(H^0(\P^2,\O(2))^{\vee})=\P^5$.
\item $E_1^{+}$ and $E_1^{-}$ are projective bundles over $\P^2\times\P^2$. Indeed, every object in $E_1^+$ fits into an exact sequence
$$
0\rightarrow I_p(1)\rightarrow \mathcal{F}\rightarrow I_q^{\vee}(-4)[1]\rightarrow 0
$$
where $I_p$ and $I_q$ denote the ideal sheaves at points $p,q\in\P^2$ and $\vee$ stands for derived dual. Thus, $E_1^+$ is a projective bundle over $\P^2\times \P^2$ with fiber
$$
\P(\Ext(1,I_q^{\vee}(-4)[1],I_p(1)))
$$
which has dimension 18. Crossing the wall will produce complexes that are extensions of the form
$$
0\rightarrow I_q^{\vee}(-4)[1]\rightarrow F^{\bullet} \rightarrow I_p(1)\rightarrow 0
$$ 
which form again a projective bundle over $\P^2\times \P^2$ now with fiber
$$
\P(\Ext(1,I_p(1),I_q^{\vee}(-4)[1]))
$$
of dimension 3.
\item $E_2^{+}$ is a projective bundle over an appropriate model of the Hilbert scheme $Hilb^5(\P^2)$ and $E_2^{-}$ is isomorphic to such model. Indeed, elements in $E_2^+$ fit into an exact sequence
$$
0\rightarrow [2\O(-2)\rightarrow 2\O(-1)\oplus \O]\rightarrow \mathcal{F}\rightarrow \O(-3)[1]\rightarrow 0
$$
and the map $2\O(-2)\rightarrow 2\O(-1)\oplus \O$ is generically injective with quotient $I_5(2)$, here $I_5$ denotes the ideal sheaf of five points on $\P^2$, then the complex $[2\O(-2)\rightarrow 2\O(-1)\oplus \O]$ must lie in some of the models of $Hilb^5(\P^2)$. To see which model exactly we observe that at the wall $W_2$ such complex has to be semistable since otherwise any destabilizing object would also destabilize our element in $E_2^+$. Denote by $Hilb^5(\P^2)(2)$ the Hilbert scheme of five points where every element is an ideal sheaf twisted by $\O(2)$. Walls for $Hilb^5(\P^2)(2)$ have been described in \cite{ABCH}, here is a summary of their found:\\
\begin{center}
\begin{tabular}{|c|c|c|c|}
\hline
Wall & Dest. Object & Exceptional Locus  &$R$\\
\hline
& & &
\\
$W_{-\frac{7}{2}}$ & $\O(1) $& \begin{tabular}{cc}$\mathcal{E}_0^+:$ & $0\rightarrow\O(1)\rightarrow I_5(2)\rightarrow \O_{l}(-3)\rightarrow 0$ \\ $\mathcal{E}_0^-:$ & $0\rightarrow\O_{l}(-3)\rightarrow G^{\bullet} \rightarrow \O(1)\rightarrow 0$\end{tabular} & $\frac{9}{2}$\\
& & & \\
\hline
& & &
\\
$W_{-\frac{5}{2}}$ & $I_p(1) $& \begin{tabular}{cc}$\mathcal{E}_1^+:$ & $0\rightarrow I_p(1)\rightarrow I_5(2)\rightarrow \O_{l}(-2)\rightarrow 0$ \\ $\mathcal{E}_1^-:$ & $0\rightarrow\O_{l}(-2)\rightarrow G^{\bullet} \rightarrow I_p(1)\rightarrow 0$\end{tabular} & $\frac{\sqrt{41}}{2}$\\
& & & \\
\hline
& & &
\\
$W_{-\frac{3}{2}}$ &\begin{tabular}{c} $\O $\\  \\ \\ $I_2(1)$\end{tabular}& \begin{tabular}{cc}$\mathcal{E}_{2,1}^+:$ & $0\rightarrow \O \rightarrow I_5(2)\rightarrow [2\O(-2)\rightarrow 2\O(-1)]\rightarrow 0$ \\ &  \\ & \\ $\mathcal{E}_{2,2}^+:$ & $0\rightarrow I_2(1)\rightarrow I_5(2)\rightarrow \O_{l}(-1)\rightarrow 0$ \end{tabular} & $\frac{3}{2}$\\
& & & \\
\hline
\end{tabular}
\end{center}
Thus the collapsing wall for the Hilbert scheme coincides with $W_2$ and therefore the complex $[2\O(-2)\rightarrow 2\O(-1)\oplus \O]$ lies on the model $N_2^+$ of $Hilb^5(\P^2)(2)$ in the chamber determined by the walls $W_{-\frac{5}{2}}$ and  $W_{-\frac{3}{2}}$.\\
\\
This shows that $E_2^+$ is a projective bundle over $N_2^+$ with generic fiber $\P(\Ext(1,\O(-3)[1],I_5(2)))$ which has dimension 15. \\
\\
As explained in section 5, $M_2^-\cong M_2$ and moreover $\pi_2^+(E_2^+)$ is the set of all split extensions 
$$
0\rightarrow \mathcal{N}\rightarrow G \rightarrow \O(-3)[1]\rightarrow 0 
$$
where $\mathcal{N}$ is an element of the model $N_2$ of $Hilb^5(\P^2)(2)$ at the collapsing wall. The contraction $\pi_2^+|_{E_2^+}$ is the composition of the fiber contraction followed by the final contraction $N_2^+\rightarrow N_2$.   
\end{itemize} 
\subsubsection{The stratification of the Bridgeland moduli}
We have described the exceptional loci but in order to give stratifications similar to those of the Gieseker moduli we need to know how these loci intersect.
\begin{itemize}
\item $E_1^+$ intersects $E_0^-$ along the Veronesse surface. Indeed, $\P^2$ embeds naturally in $E_0^-=\P(\Ext(1,\O(1),\O(-4)[1]))\cong \P(H^0(\O(2))^{\vee})$ by the 2-uple embedding, its image is the set of extensions fitting in a commutative diagram
$$
\begin{diagram}\dgARROWLENGTH=1.5em
\node{}\node{}\node{}\node{I_p(1)}\arrow{sw,--}\arrow{s}\node{}\\
\node{0}\arrow{e}\node{\O(-4)[1]}\arrow{e}\node{F^{\bullet}}\arrow{e}\node{\O(1)}\arrow{e}\node{0}
\end{diagram}
$$
Such extensions come from pulling back  the unique nontrivial class of extensions in $\Ext(1,\C_p,\O(-4)[1])$ by the exact sequence $0\rightarrow I_p(1)\rightarrow \O(1)\rightarrow \C_p\rightarrow 0$ and therefore the Veronesse surface corresponds to a section of $E_1^+$ over the diagonal. For simplicity we denote by $Y$ the projective space $E_0^-$ and by $X$ the image of $\P^2$ by the 2-uple embedding. 
\item To find $E_1^-\cap E_2^+$ we recall that every $N\in \mathcal{E}_1^-$ fits into an exact sequence
$$
0\rightarrow \O_l(-2)\rightarrow N\rightarrow I_p(1)\rightarrow 0.
$$ 
By pushing forward extensions classes $[S]\in \Ext(1,\O(-3)[1],\O_l(-2))$ we get commutative diagrams 
$$
\begin{diagram}\dgARROWLENGTH=1.5em
\node{}\node{0}\arrow{s}\node{}\node{}\node{}\\
\node{0}\arrow{e}\node{\O_l(-2)}\arrow{s}\arrow{e}\node{S}\arrow{s}\arrow{e}\node{\O(-3)[1]}\arrow{e}\arrow{s,=}\node{0}\\
\node{0}\arrow{e}\node{N}\arrow{s}\arrow{e}\node{F^{\bullet}}\arrow{e}\arrow{sw,b,--}{h}\node{\O(-3)[1]}\arrow{e}\node{0}\\
\node{}\node{I_p(1)}\arrow{s}\node{}\node{}\node{}\\
\node{}\node{0}\node{}\node{}\node{}
\end{diagram}.
$$
It can be checked that $[S]=[I^{\vee}_q[-4]]$ for some $q\in l$ and so $E_1^+\cap E_0^-$ is a tower of projective bundles over the divisor on $\P^2\times (\P^2)^*$ consisting of points $(p,l)$ with $p\in l$.

\item More interesting is to describe the strict transform of $Y$ by $\pi_1$. We claim 
$$
\overline{\pi_1(Y\setminus X)}\cong Bl_{X}Y.
$$
Indeed, since $Y$ intersects $E_1^+$ along a section over the diagonal we have a closed immersion $Y\rightarrow M_1$ and a diagram
$$
\begin{diagram}\dgARROWLENGTH=1.5em
\node{}\node{Bl_{E_1^+}M_1^+}\arrow{s}\arrow{se}\node{}\\
\node{Bl_{X}Y}\arrow{ne}\arrow{s}\node{M_1^+}\arrow{se}\node{M_1^-}\arrow{s}\\
\node{Y}\arrow{ne,J}\arrow[2]{e,b,J}{\pi_1^+|_{Y}}\node{}\node{M_1}
\end{diagram}
$$
where every square is a fiber square, giving a closed immersion $Bl_{X}Y\rightarrow M_1^-$.\\
\\
To prove that $M_1^-\times_{M_1}M_1^+\cong Bl_{E_1^+}M_1^+\cong Bl_{E_1^-}M_1^-$, let $F^{\bullet}\in E_1^+$, i.e., $F^{\bullet}$ is an extension of the form:
$$
0\rightarrow I_{p}(1)\rightarrow F^{\bullet}\rightarrow I_q^{\vee}(-4)[1]\rightarrow 0.
$$
Applying $\Hom(F^{\bullet},\_)$ and $\Hom(\_,F^{\bullet})$ we get respectively diagrams
$$
\begin{diagram}\dgARROWLENGTH=1.0em
\node{}\node{}\node{0}\arrow{s}\node{}\\
\node{}\node{}\node{\Ext(1,I_q^{\vee}(-4)[1],I_q^{\vee}(-4)[1])}\arrow{s}\node{}\\
\node{\hspace{1cm}\Ext(1,F^{\bullet},I_p(1))}\arrow{e}\node{\Ext(1,F^{\bullet},F^{\bullet})}\arrow{se,b}{f}\arrow{e}\node{\Ext(1,F^{\bullet},I_q^{\vee}(-4)[1])}\arrow{s}\arrow{e}\node{\Ext(2,F^{\bullet},I_p(1))=0}\\
\node{}\node{}\node{\Ext(1,I_p(1),I_q^{\vee}(-4)[1])}\arrow{s}\node{}\\
\node{}\node{}\node{0}\node{}
\end{diagram}
$$
and 
$$
\begin{diagram}\dgARROWLENGTH=1.0em
\node{}\node{}\node{0}\arrow{s}\node{}\\
\node{}\node{}\node{\Ext(1,I_p(1),I_p(1))}\arrow{s}\node{}\\
\node{\hspace{1cm}\Ext(1,I_q^{\vee}(-4)[1],F^{\bullet})}\arrow{e}\node{\Ext(1,F^{\bullet},F^{\bullet})}\arrow{se,b}{f}\arrow{e}\node{\Ext(1,I_p(1),F^{\bullet})}\arrow{s}\arrow{e}\node{\Ext(2,I_q^{\vee}(-4)[1],F^{\bullet})=0}\\
\node{}\node{}\node{\Ext(1,I_p(1),I_q^{\vee}(-4)[1])}\arrow{s}\node{}\\
\node{}\node{}\node{0}\node{}
\end{diagram}
$$
Thus $V=ker(f)$ fits into an exact sequence
$$
\hspace{1.3cm}0\rightarrow \C\rightarrow \Ext(1,I_q^{\vee}(-4)[1],I_p(1))\rightarrow V\rightarrow\Ext(1,I_p(1),I_p(1))\oplus \Ext(1,I_q^{\vee}(-4)[1],I_q^{\vee}(-4)[1])\rightarrow 0 
$$
This gives the exact sequence
$$
0\rightarrow (TE_1^+)_{F^{\bullet}}\rightarrow (T M_1^+|_{E_1^+})_{F^{\bullet}}\rightarrow \Ext(1,I_p(1),I_q^{\vee}(-4)[1])=H^0(\mathbb{P}^2,I_p\otimes I_q(2))^*\rightarrow 0.
$$
Now, consider the fiber square
$$
\begin{diagram}\dgARROWLENGTH=1em
\node{\P^2\times\P^2\times\P^2}\arrow{e,t}{p_{23}}\arrow{s,l}{p_{12}}\node{\P^2\times\P^2}\arrow{s,r}{p_1}\\
\node{\P^2\times\P^2}\arrow{e,b}{p_2}\node{\P^2}
\end{diagram}
$$
with the obvious projections. Let $\mathcal{I}^+=p_{12}^*(\mathcal{I}_{\Delta})\otimes p_{23}^*(\mathcal{I}_{\Delta}(2,0))$ where $\mathcal{I}_{\Delta}$ is the ideal sheaf of the diagonal $\Delta\subset\P^2\times\P^2$. $\mathcal{I}^+$ is flat over $\P^2\times\P^2$ via the projection $p_{13}:\P^2\times\P^2\times\P^2\rightarrow \P^2\times\P^2$ since $\mathcal{I}_{\Delta}$ is flat over $\P^2$. Thus ${p_{13}}_{*}(\mathcal{I}^+)$ is locally free and the computation above shows that there is an exact sequence
$$
0\rightarrow TE_1^+\longrightarrow T M_1^+|_{E_1^+}\longrightarrow \left(\pi_1^+|_{E_1^+}\right)^{*}({p_{13}}_{*}(\mathcal{I}^+))^*\rightarrow 0.
$$
Similarly we can get an exact sequence
$$
0\rightarrow TE_1^-\longrightarrow T {M_1^-}|_{E_1^-}\longrightarrow \left(\pi_1^-|_{E_1^-}\right)^{*}({p_{13}}_{*}(\mathcal{I}^-))\rightarrow 0,
$$
where $\mathcal{I}^-=p_{12}^*(\mathcal{I}_{\Delta})\otimes p_{23}^*(\mathcal{I}_{\Delta}(5,0))$. This shows that the fiber product $M_1^-\times_{M_1}M_1^+$ restricts to a fibered diagram
$$
\begin{diagram}
\node{\P(\mathcal{N}_{E_1^+/M_1^+})= \P(\mathcal{N}_{E_1^-/M_1^-})}\arrow{e}\arrow{s}\node{E_1^-}\arrow{s,r}{\pi_1^-|_{E_1^-}}\\
\node{E_1^+}\arrow{e,b}{\pi_1^+|_{E_1^+}}\node{\P^2\times\P^2}
\end{diagram}
$$
and our claim follows.
\item If $l\subset \P^2$ is any line then $\ext(1,\O(1),\O_l(-3))=3$ and the natural map $\Ext(1,\O(1),\O_l(-3))\rightarrow \Ext(1,\O(1),\O(-4)[1])$ is injective. Let $H$ be the plane in $Y=\mathbb{P}^5$ obtained this way, i.e., the set of complexes fitting into a commutative diagram
$$
\begin{diagram}\dgARROWLENGTH=1.5em
\node{}\node{0}\arrow{s}\node{0}\arrow{s}\node{}\node{}\\
\node{0}\arrow{e}\node{\O_l(-3)}\arrow{s}\arrow{e}\node{N}\arrow{s}\arrow{e}\node{\O(1)}\arrow{e}\arrow{s,=}\node{0}\\
\node{0}\arrow{e}\node{\O(-4)[1]}\arrow{s}\arrow{e}\node{F^{\bullet}}\arrow{sw,--}\arrow{e}\node{\O(1)}\arrow{e}\node{0}\\
\node{}\node{\O(-3)[1]}\arrow{s}\node{}\node{}\node{}\\
\node{}\node{0}\node{}\node{}\node{}
\end{diagram}.
$$
We claim that the image of $l$ by the 2-uple embedding is contained in $H$. Indeed, if $p\in l$ then we get a diagram
$$
\begin{diagram}\dgARROWLENGTH=1.5em
\node{}\node{}\node{}\node{\O_l(-3)}\arrow{s}\arrow{sw,--}\node{}\\
\node{0}\arrow{e}\node{I_p(1)}\arrow{e}\arrow{s,=}\node{N}\arrow{e}\arrow{s}\node{\O_l(-2)}\arrow{e}\arrow{s}\node{0}\\
\node{0}\arrow{e}\node{I_p(1)}\arrow{e}\node{\O(1)}\arrow{e}\node{\C_p}\arrow{e}\node{0}
\end{diagram}
$$
which completes the first diagram implying $p\in X$. This implies that the closure of $X_2$ in $M_0^-$ intersects $E_0^- $ along $Sec\ X$, which is the union of all $H$ in $Y$ as $l$ varies in $\mathbb{P}^{2*}$. Thus
$$
E_2^+\cap \overline{\pi_1(Y\setminus X)}\cong \widetilde{Sec\ X}.
$$
\item Finally, we would like to know what happens to $\widetilde{Sec\ X}$ at the wall $W_2$. To see this, notice that off $X$, a point $F^{\bullet}$ in $Sec\ X$ corresponds to a non-split extension
$$
0\rightarrow N\rightarrow F^{\bullet}\rightarrow \O(-3)[1]\rightarrow 0
$$ 
with $N\in \mathcal{E}_0^{-}$. Such $N$ can be obtained as a pull back diagram 
$$
\begin{diagram}\dgARROWLENGTH=1em
\node{}\node{}\node{}\node{0}\arrow{s}\node{}\\
\node{}\node{}\node{}\node{\O}\arrow{s}\arrow{sw}\node{}\\
\node{0}\arrow{e}\node{\O_l(-3)}\arrow{e}\arrow{s,=}\node{N}\arrow{e}\arrow{s}\node{\O(1)}\arrow{s}\arrow{e}\node{0}\\
\node{0}\arrow{e}\node{\O_{l}(-3)}\arrow{e}\node{\mathcal{G}}\arrow{s}\arrow{e}\node{\O_l(1)}\arrow{s}\arrow{e}\node{0}\\
\node{}\node{}\node{0}\node{0}\node{}
\end{diagram}
$$
The final contraction $N_2^+\rightarrow N_2$ sends every such $N$ to $\O\oplus \mathcal{G}$ and so $\pi_2^+:M_2^+\rightarrow M_2$ contracts $\widetilde{Sec\ X}$ to a $(\P^2)^*$.\\
\\ 
This information is enough to write down stratifications by locally closed subsets for the Bridgeland moduli spaces. Moreover, each strata can be embedded as an open subset of a projective bundle.

\end{itemize}
\bibliographystyle{alpha}
\bibliography{references.bib}
\end{document}